\newcommand{\diam}{\mathsf{diam}}
\newcommand{\Oh}{{\mathcal O}}
\newcommand{\ip}[2]{\left\langle #1 , #2 \right\rangle}    
\newcommand{\R}{{\mathbb R}}
\newcommand{\dom}{{\mathrm{dom}}}
\newcommand{\D}{{\mathcal{D}}}
\DeclareMathOperator*{\argmin}{arg\,min}
\newtheorem{lemma}{Lemma}
\newtheorem{theorem}{Theorem}
\newtheorem{proposition}{Proposition}
\newcommand{\subopt}{{\mathsf{subopt}}}
\newcommand{\gap}{{\mathsf{gap}}}
\title{Affine invariant convergence rates of the conditional gradient method}
\author{
Javier F. Pe\~na\thanks{Tepper School of Business,
Carnegie Mellon University, USA, {\tt jfp@andrew.cmu.edu}}
}
\begin{document}

\maketitle

\begin{abstract}
We show that the conditional gradient method for the convex composite problem \[\min_x\{f(x) + \Psi(x)\}\] generates primal and dual iterates with a duality gap converging to zero provided a suitable {\em growth property} holds and the algorithm makes a judicious choice of stepsizes.  The rate of convergence of the duality gap to zero ranges from sublinear to linear depending on the degree of the growth property.  The growth property and convergence results depend on the pair  $(f,\Psi)$ in an affine invariant and norm-independent fashion. 
\end{abstract}
\section{Introduction}
We consider the conditional gradient method for the 
 composite minimization problem
\[
\min_x \{f(x)+ \Psi(x)\}
\]
where $f:\R^n\rightarrow \R\cup\{\infty\}$ and $\Psi:\R^n\rightarrow \R\cup\{\infty\}$ are closed convex functions, $f$ is differentiable on an open set containing $\dom(\Psi)$, and there is an available oracle for the mapping $$g\mapsto \argmin\{\ip{g}{y}+\Psi(y)\}.$$  The traditional format of the conditional gradient method  (also known as the Frank-Wolfe algorithm) as discussed in~\cite{FreuG16,Jagg13} corresponds to the case when $\Psi$ is the indicator function of a compact convex set in $\R^n$.  Some of the most attractive features of the conditional gradient method are its affine invariance, norm-independence, and lack of reliance on any projection mapping.  These features stand in stark contrast to those of most other first-order methods.  It is therefore natural to seek convergence results for the conditional gradient method that are affine invariant and norm-independent as any otherwise convergence result would typically be loose due to dependence on spurious factors.

We show that the conditional gradient method generates primal and dual iterates with a duality gap converging to zero provided a suitable {\em growth property} holds (see~\eqref{eq.growth.cond}) and the algorithm makes a judicious choice of stepsizes (see~\eqref{eq.stepsize} and~\eqref{eq.stepsize.relax}).    The rate of convergence of the duality gap to zero ranges from sublinear to linear (see Theorem~\ref{thm.main} and Theorem~\ref{prop.conv.gral}) depending on the degree of the growth property.  The growth property and convergence results depend on the pair  $(f,\Psi)$ in an affine invariant and norm-independent fashion.  In fact, our main developments do not rely on any norms at all.

Our work is inspired by recent developments of Nesterov~\cite{Nest18}, Ghadimi~\cite{Ghad19}, and Kerdreux et al~\cite{Kerd21}.   
Under the assumption that the domain of $\Psi$ is bounded, Nesterov~\cite{Nest18} established a $\Oh(1/k^\nu)$ convergence rate for the conditional gradient method when $\nabla f$ is $\nu$-H\"older continuous. Nesterov~\cite{Nest18} also showed the stronger convergence rate $\Oh(1/k^{2\nu})$ under the additional assumption that $\Psi$ is strongly convex.  Ghadimi~\cite{Ghad19} 
showed an even stronger linear rate of convergence when $\nabla f$ is Lipschitz continuous and $\Psi$ is strongly convex for a judicious choice of stepsizes.  On the other hand, Kerdreux et al.~\cite{Kerd21} considered the case when $\nabla f$ is Lipschitz continuous and $\Psi$ is the indicator function of a uniformly convex set.  Kerdreux et al.~\cite{Kerd21} showed that the convergence rate ranges from sublinear to linear depending on the degree of uniform convexity of the set.
The results in~\cite{Ghad19,Kerd21,Nest18} are all norm-dependent since they rely on H\"older continuity and uniform convexity.  The norm-dependence of the results in~\cite{Ghad19,Kerd21,Nest18} in turn implies that they are not affine invariant when the norms are not affine invariant, which is typically the case.

The affine invariance and norm-independence properties of the conditional gradient method suggest that there should be an affine invariant and norm-independent approach to obtain the sublinear to linear spectrum of convergence rates.  The central contribution of this paper is to show that this is indeed the case.  To that end, we establish general convergence results (Theorem~\ref{thm.main} and Theorem~\ref{prop.conv.gral}) provided a {\em growth property}~\eqref{eq.growth.cond} holds  and the stepsizes are judiciously chosen.  The growth property~\eqref{eq.growth.cond} can be seen as an extension of the {\em curvature constant} proposed by Jaggi~\cite{Jagg13}.  It also has a flavor similar to that of the {\em relative smoothness} and {\em relative continuity} properties in the context of Bregman proximal methods as discussed in~\cite{BausBT16,Lu17,LuFN18,Tebo18,VanN16,VanN17}. 
Our main  convergence results (Theorem~\ref{thm.main} and Theorem~\ref{prop.conv.gral}), obtained via the growth property~\eqref{eq.growth.cond},  are stated in terms of the duality gap between the primal and dual iterates that the algorithm generates.  
We also  develop parallel results (Theorem~\ref{thm.main.redux} and Theorem~\ref{prop.conv.gral.redux}) on the suboptimality gap via a relaxation~\eqref{eq.weak.growth.cond} of the growth property~\eqref{eq.growth.cond}.

Aside from its natural compatibility with the properties of the conditional gradient method, the affine invariance and norm independence of our approach yields convergence results that are  sharper and more general than previous approaches that do not have these two fundamental properties.  More precisely, as we detail in Section~\ref{sec.growth.examples}, Section~\ref{sec.weak}, and Section~\ref{sec.final}, our convergence results not only subsume some of the norm-dependent convergence results in~\cite{GarbH15,Ghad19,Kerd21,Nest18,XY18} but are also at least as sharp as these previous norm-dependent results for {\em any} choice of norms.  Put differently, the affine invariance and norm-independence of our main convergence results automatically circumvent any possible spurious overhead due to a particular choice of norms or affine scaling.  It is also noteworthy that our main results (Theorem~\ref{thm.main}, Theorem~\ref{prop.conv.gral}, Theorem~\ref{thm.main.redux}, and Theorem~\ref{prop.conv.gral.redux}) rely only on straightforward stepsize procedures that are adaptive to the typically unknown parameters associated to the growth or weak growth properties.  This stands in sharp contrast to some of the algorithmic schemes in~\cite{GarbH15,Ghad19,Kerd21,Nest18,XY18} that rely on knowledge of specific parameters like Lipschitz continuity constants.

This article is related to a number of papers in the rich and increasingly growing literature on the conditional gradient method.  Our primal-dual approach, with emphasis on bounding the duality gap for the conditional gradient iterates, is similar in spirit to the approaches used by Bach~\cite{Bach15}, Freund and Grigas~\cite{FreuG16}, and Jaggi~\cite{Jagg13}.  Like all of~\cite{Bach15,FreuG16,Jagg13}, our main results provide upper bounds on the duality gap between the (primal) conditional gradient iterates and some canonical dual iterates that the algorithm also generates.
However, our central goal and main results are fundamentally different from those in~\cite{Bach15,FreuG16,Jagg13}.  Bach~\cite{Bach15} focuses primarily on the equivalence between the conditional gradient and mirror descent algorithms.  
All of the convergence results in~\cite{Bach15,FreuG16,Jagg13} are of order $\Oh(1/k)$ or some minor variations.  
By contrast, our main focus is the spectrum of convergence rates of the conditional gradient method ranging from sublinear to linear.  The latter goal is also pursued in the recent articles~\cite{Ghad19,Kerd21,Nest18,XY18} albeit in a norm-dependent fashion.  The articles~\cite{KerdDP22,RZ20} also establish a spectrum of convergence rates ranging from sublinear to linear but for more elaborate variants of the conditional gradient method that include  various types of search directions, restart procedures, and/or are restricted to specific types of domains.  
As we detail in Section~\ref{sec.growth.examples}, Section~\ref{sec.weak}, and Section~\ref{sec.final}, our affine invariant and norm independent results subsume, sharpen, and extend most of the norm-dependent results previously derived in~\cite{GarbH15,Ghad19,Kerd21,Nest18,XY18}.
Our developments, driven by affine-invariance, are in the same spirit as those of Kerdreux et al.~\cite{KerdL21}.  However, the approach in~\cite{KerdL21} applies only to the case when $\Psi$ is the indicator function of a strongly convex set and focuses on an affine-invariant {\em directional smoothness} property that ensures linear convergence.  By contrast, our work applies to general $\Psi$ and relies on the growth property~\eqref{eq.growth.cond} that leads to convergence rates covering the entire range from sublinear to linear.  

The remaining sections of the paper are organized as follows.  Section~\ref{sec.main} presents our main developments, namely the affine invariant and norm-independent growth property~\eqref{eq.growth.cond} and convergence results for the conditional gradient method.  For ease of exposition, we include two convergence results.  First, Theorem~\ref{thm.main} relies on the assumption that the stepsizes are chosen via the exact line-search~\eqref{eq.stepsize}.  Theorem~\ref{prop.conv.gral} shows that the convergence rates in~Theorem~\ref{thm.main} continue to hold, albeit with some more complicated constants, for the more flexible and easily implementable choice of stepsizes~\eqref{eq.stepsize.relax}.  
Section~\ref{sec.growth.examples} details some classes of problems that satisfy the growth property~\eqref{eq.growth.cond}.  As a byproduct, we recover, sharpen, and extend some of the norm-dependent convergence rates established in~\cite{Ghad19,Kerd21,Nest18}.  
Section~\ref{sec.weak} presents developments parallel to those in Section~\ref{sec.main} but for the {\em suboptimality gap} by relying on the {\em weak} version~\eqref{eq.weak.growth.cond} of the growth property~\eqref{eq.growth.cond}.
Section~\ref{sec.final} presents a more flexible and general {\em local} growth property that in turn yields local convergence results for the conditional gradient method.

\section{Growth property and affine invariant convergence}
\label{sec.main}

Consider the composite minimization problem
\begin{equation}\label{eq.problem}
\min_x \{f(x)+ \Psi(x)\} 
\end{equation}
where both $f:\R^n\rightarrow \R\cup\{\infty\}$ and $\Psi:\R^n\rightarrow \R\cup\{\infty\}$ are closed convex functions.  
Algorithm~\ref{algo.CG} describes the conditional gradient algorithm for~\eqref{eq.problem}. The algorithm relies on the following assumptions about~\eqref{eq.problem}:
\begin{itemize}
\item[A1.] The function $f$ is differentiable on an open set containing $\dom(\Psi)$. 
\item[A2.] A point in the following set of minimizers exists and is computable for all $x\in \dom(f)$
\begin{equation}\label{eq.oracle}
\argmin_s\{\ip{\nabla f(x)}{s} + \Psi(s)\}.
\end{equation}
\end{itemize}
The format of problem~\eqref{eq.problem} is the same as that considered in~\cite{Nest18,Ghad19}.  The  traditional format of the conditional gradient method, as discussed in~\cite{FreuG16,Jagg13}, corresponds to the special case when $\Psi$ is the indicator function $\Psi=\delta_C$ of some closed convex set $C\subseteq\R^n$.  
A fundamental difference between the conditional gradient method and other popular first-order methods for~\eqref{eq.problem}, like proximal gradient or more general Bregman proximal gradient methods, is the reliance on the linear oracle~\eqref{eq.oracle} as opposed to the reliance on a proximal mapping for the function $\Psi$.  
This fundamental difference has two profound implications.  First, the conditional gradient method is preferable for a problem of the form~\eqref{eq.problem} where the proximal mapping for $\Psi$ is unavailable or prohibitively expensive but the linear oracle~\eqref{eq.oracle} is available.  This is the case in a large class of problems in data science such as those discussed in~\cite{Bach15,BredCFR22,BredLM09,FreuG16,GarbKS21,Ghad19,HarcJN15,Jagg13,YuZS17}.  In addition to the popular setting where $\Psi$ is the indicator function of a closed convex set, the articles~\cite{Bach15,BredCFR22,BredLM09,GarbKS21,KuniW22,LiD22,MineF81,YuZS17} also discuss problems of the form~\eqref{eq.problem} where $\Psi$ is not an indicator function but a more involved regularization term.  This more general version of the conditional gradient method is sometimes called {\em generalized conditional gradient method.}
Second, the reliance on the linear oracle~\eqref{eq.oracle} as opposed to the reliance on proximal mappings endows the conditional gradient method with the affine invariance and norm-independence properties  that play a central role in this article.

Our main developments (Theorem~\ref{thm.main} and Theorem~\ref{prop.conv.gral}) concern the behavior of the gap between the {\em primal} iterates $x_k, \; k=0,1,\dots$ generated by 
Algorithm~\ref{algo.CG} and the {\em dual} iterates $g_k:=\nabla f(x_k), \; k=0,1,\dots$ that Algorithm~\ref{algo.CG} also generates for the Fenchel  dual of~\eqref{eq.problem}, namely
\begin{equation}\label{eq.dual}
\max_u \{-f^*(u)- \Psi^*(-u)\}.
\end{equation}
Here $f^*$ and $\Psi^*$ denote the conjugates of $f$ and $\Psi$ respectively.
We will rely on the observation that for $g\in \R^n$, the set $\argmin_y\{\ip{g}{y} + \Psi(y)\}$ is precisely the subdifferential $\partial \Psi^*(-g)$.  
We will also rely on the observation that the closedness of $f$ and $\Psi$ together with Assumptions A1 and A2 imply that $\dom(\Psi)\subseteq \dom(f)$ and $\dom(f^*)\subseteq-\dom(\Psi^*)$.

\begin{algorithm}
\caption{Conditional gradient algorithm}\label{algo.CG}
\begin{algorithmic}[1]
	\STATE {\bf input:}  $(f,\Psi)$, $x_{0}\in \dom(\Psi)$
	\FOR{$k=0,1,2,\dots$}
		\STATE pick $s_{k} \in \argmin_y\{\ip{\nabla f(x_k)}{y} + \Psi(y)\}$ and  $\theta_k \in [0,1]$
		\STATE let $x_{k+1} := (1-\theta_k) x_k + \theta_k s_k$  
	\ENDFOR
\end{algorithmic}
\end{algorithm}

Starting from a current iterate $x$,  Algorithm~\ref{algo.CG} updates $x$ to $x_+:=x+\theta(s-x)$ where $s \in \partial \Psi^*(-g)$ for $g=\nabla f(x)$, and $\theta \in [0,1]$.  The choice of $g=\nabla f(x)$ and $s\in\partial \Psi^*(-g)$ yields the identity
\[
f(x) + f^*(g) + \Psi(s) + \Psi^*(-g) = \ip{g}{x-s},
\]
which can be equivalently stated as the {\em Wolfe gap:}
\begin{equation}\label{eq.wolfe.gap}
f(x)+\Psi(x) + f^*(g)+\Psi^*(-g)  = 
\ip{g}{x-s} + \Psi(x) - \Psi(s)
\end{equation}
or as 
\begin{equation}\label{eq.basic.gap}
f(s)+\Psi(s) + f^*(g)+\Psi^*(-g)  = 
f(s) -f(x) - \ip{g}{s-x} = D_f(s,x),
\end{equation}
where $D_f$ denotes the {\em Bregman distance} of $f$, that is, the function 
\[
D_f(y,x) = f(y) - f(x) - \ip{\nabla f(x)}{y-x}.
\]
In the special case when $\Psi$ is the indicator function of a closed convex set $C\subseteq \R^n$, the Wolfe gap~\eqref{eq.wolfe.gap} has the simpler and popular expression
\[
f(x)+\Psi(x) + f^*(g)+\Psi^*(-g) = \ip{g}{x-s}. 
\]

Our main developments are stated in terms of the functions $\gap$ and $\D$ described next.  
Define the duality gap function $\gap:\dom(\Psi)\times \dom(f^*) \rightarrow \R$ as follows
\begin{equation}\label{eq.gap}
\gap(x,u):=f(x)+\Psi(x) + f^*(u)+\Psi^*(-u).
\end{equation}
Define $\D:\dom(\Psi)\times \dom(\Psi) \times[0,1]\rightarrow \R$ as follows
\begin{align}\label{eq.def.D}
\D(x,s,\theta):=D_f(x+\theta(s-x),x) + \Psi(x+\theta(s-x)) - (1-\theta)\Psi(x) - \theta \Psi(s).
\end{align}

The following proposition establishes a fundamental gap reduction identity connecting $\gap$ and $\D$.  
\begin{proposition}\label{prop.gap.reduction}
Suppose $x\in \dom(\Psi), g = \nabla f(x),$ and $s\in \partial \Psi^*(-g)$  Then for $\theta\in[0,1]$ we have
\begin{equation}\label{eq.gap.reduction}
\gap(x+\theta(s-x),g) = (1-\theta)\gap(x,g) + \D(x,s,\theta).
\end{equation}
\end{proposition}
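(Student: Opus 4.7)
The plan is to prove the identity by straightforward algebraic manipulation, using the two Fenchel equalities that are guaranteed by the hypotheses $g = \nabla f(x)$ and $s \in \partial \Psi^*(-g)$. Specifically, I would start from the two fundamental identities
\[
f(x) + f^*(g) = \ip{g}{x}, \qquad \Psi(s) + \Psi^*(-g) = -\ip{g}{s},
\]
which are equivalent to $g = \nabla f(x)$ and $s \in \partial \Psi^*(-g)$ respectively. Adding these yields the Wolfe-gap form of $\gap(x,g)$ already recorded in~\eqref{eq.wolfe.gap}, namely
\[
\gap(x,g) = \ip{g}{x-s} + \Psi(x) - \Psi(s).
\]

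Next, I would apply the same two identities to rewrite $\gap(x_+, g)$ at the point $x_+ := x + \theta(s-x)$. Since the dual variable is still $g$, the terms $f^*(g)$ and $\Psi^*(-g)$ carry over, so
\[
\gap(x_+, g) = f(x_+) + \Psi(x_+) + \ip{g}{x} - f(x) - \ip{g}{s} - \Psi(s).
\]
Using the definition of the Bregman distance to write $f(x_+) - f(x) = D_f(x_+, x) + \ip{g}{x_+ - x}$ together with $x_+ - x = \theta(s-x)$, I obtain
\[
\gap(x_+, g) = D_f(x_+, x) + \Psi(x_+) + (1-\theta)\ip{g}{x-s} - \Psi(s).
\]

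Finally, I would compute the right-hand side of~\eqref{eq.gap.reduction} by substituting the Wolfe-gap expression for $\gap(x,g)$ and the definition~\eqref{eq.def.D} of $\D(x,s,\theta)$. The $(1-\theta)\Psi(x)$ terms cancel, the two fractional copies of $\Psi(s)$ combine into $-\Psi(s)$, and one arrives at exactly the expression for $\gap(x_+, g)$ derived above, completing the proof.

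There is no real obstacle here; the entire statement is a bookkeeping identity that follows from the two Fenchel equalities and the definitions. The only point to watch is keeping track of which $\Psi$ and $\Psi(s)$ terms get multiplied by $\theta$ versus $1-\theta$ when expanding $\D(x,s,\theta)$ and $(1-\theta)\gap(x,g)$; once these are lined up, everything telescopes cleanly.
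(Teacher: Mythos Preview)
Your proposal is correct and is essentially the same direct algebraic verification as the paper's proof: both rest on the two Fenchel equalities $f(x)+f^*(g)=\ip{g}{x}$ and $\Psi(s)+\Psi^*(-g)=-\ip{g}{s}$ together with the definition of $D_f$ and $\D$. The only cosmetic difference is that the paper first forms the convex combination $(1-\theta)\cdot\eqref{eq.wolfe.gap}+\theta\cdot\eqref{eq.basic.gap}$ and then simplifies via the identity $\theta D_f(s,x)+f(x_+)-(1-\theta)f(x)-\theta f(s)=D_f(x_+,x)$, whereas you expand $\gap(x_+,g)$ directly using $D_f(x_+,x)$ and match terms; your route avoids the detour through $D_f(s,x)$ but is otherwise the same bookkeeping.
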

\begin{proof}
Adding~\eqref{eq.wolfe.gap} times $(1-\theta)$ and~\eqref{eq.basic.gap} times $\theta$ we get
\[
(1-\theta)(f+\Psi)(x) + \theta(f+\Psi)(s) + f^*(g)+\Psi^*(-g) = (1-\theta) \gap(x,g) + \theta D_f(s,x)
\]
Thus
\begin{align*}
&\gap(x+\theta(s-x),g) \\
&= (f+\Psi)(x+\theta(s-x)) + f^*(g)+\Psi^*(-g) \\
&= (1-\theta) \gap(x,g) + \theta D_f(s,x) +(f+\Psi)(x+\theta(s-x)) - (1-\theta)(f+\Psi)(x) - \theta(f+\Psi)(s).
\end{align*}
To finish, observe that
\begin{align*}
&\theta D_f(s,x) +(f+\Psi)(x+\theta(s-x)) - (1-\theta)(f+\Psi)(x) - \theta(f+\Psi)(s) \\
&=D_f(x+\theta(s-x),x) + \Psi(x+\theta(s-x)) - (1-\theta)\Psi(x) - \theta\Psi(s) \\
&=\D(x,s,\theta).
\end{align*}
\end{proof}
Theorem~\ref{thm.main} below shows that Algorithm~\ref{algo.CG} generates primal and dual iterates whose duality gap converges to zero provided the growth property~\eqref{eq.growth.cond} below holds and Algorithm~\ref{algo.CG} makes a judicious choice of stepsizes at each iteration.  
For a cleaner statement, Theorem~\ref{thm.main} assumes that Algorithm~\ref{algo.CG} chooses the stepsize $\theta_k$ via the following  line-search procedure 
\begin{equation}\label{eq.stepsize}
\theta_k := \argmin_{\theta\in[0,1]}\left\{(1-\theta)\gap(x_k,g_k)+ \D(x_k,s_k,\theta)\right\}.
\end{equation}
As Theorem~\ref{prop.conv.gral} shows, the main conclusions in Theorem~\ref{thm.main} continue to hold, albeit with more complicated constants, for a more flexible procedure to choose the stepsize.

The gap reduction identity~\eqref{eq.gap.reduction} in Proposition~\ref{prop.gap.reduction} implies that~\eqref{eq.stepsize} can also be equivalently stated as the {\em exact} line-search procedure 
\[
\theta_k := \argmin_{\theta\in[0,1]} (f+\Psi)(x_k+\theta(s_k-x_k)).
\]
We prefer~\eqref{eq.stepsize} because of its direct connection with the gap reduction identity~\eqref{eq.gap.reduction}.

Theorem~\ref{thm.main} hinges on the following growth property connecting the functions $\D$ and $\gap$.  
Suppose $q >1 $ and $r \in [0,1]$.  We shall say that the pair $(\D,\gap)$ satisfies the {\em $(q,r)$-growth property} if there exists a finite constant $M >0$ such that for all $x \in \dom(\Psi), \, g:=\nabla f(x),$ and $s\in\partial \Psi^*(-g)$ 
\begin{equation}\label{eq.growth.cond}
\D(x,s,\theta) \le \frac{M\theta^{q}}{q}\cdot {\gap}(x,g)^r \text{ for all } \theta \in [0,1].
\end{equation}
As we detail in Section~\ref{sec.growth.examples} below, the $(q,r)$-growth property~\eqref{eq.growth.cond} can be seen as a generalization the {\em curvature constant} defined by Jaggi~\cite{Jagg13}.
The growth property~\eqref{eq.growth.cond} also has a flavor similar to that of the {\em relative smoothness} and {\em relative continuity} properties in the context of Bregman proximal methods as discussed in~\cite{BausBT16,Lu17,LuFN18,Tebo18,VanN16,VanN17}.

The crux of Theorem~\ref{thm.main} is to combine the gap reduction identity~\eqref{eq.gap.reduction}, the growth property~\eqref{eq.growth.cond}, and the following technical lemma  due to Borwein, Li, and Yao~\cite[Lemma 4.1]{BorwLY14}.

\begin{lemma}\label{lemma.recurrence} Suppose $p>0$ and $\beta_k,\delta_k \ge 0$ are such that  
$
\beta_{k+1} \le \beta_k(1-\delta_k\beta_k^p) \text{ for } k=0,1,\dots.
$
Then
\[
\beta_k \le \left( \beta_0^{-p} +  p \cdot\sum_{i=0}^{k-1}\delta_i\right)^{-\frac{1}{p}}.
\]
\end{lemma}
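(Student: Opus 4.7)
The plan is to proceed by induction on $k$, showing the equivalent statement
\[
\beta_k^{-p} \ge \beta_0^{-p} + p\sum_{i=0}^{k-1}\delta_i.
\]
The base case $k=0$ is immediate. For the inductive step, assume the bound for $k$; I may assume $\beta_{k+1}>0$, since otherwise the claimed inequality is vacuous. In particular $\beta_k>0$ and the recurrence forces $1-\delta_k\beta_k^p > 0$. Raising both sides of $\beta_{k+1}\le\beta_k(1-\delta_k\beta_k^p)$ to the $-p$ power (which reverses the inequality, since $x\mapsto x^{-p}$ is decreasing for $p>0$) gives
\[
\beta_{k+1}^{-p} \ge \beta_k^{-p}\,(1-\delta_k\beta_k^p)^{-p}.
\]

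The crux of the argument is the one-variable inequality
\[
(1-x)^{-p} \ge 1 + px \quad \text{for } x\in[0,1),\; p>0,
\]
which I would verify by noting that the function $h(x) := (1-x)^{-p} - 1 - px$ satisfies $h(0)=0$ and $h'(x) = p\left[(1-x)^{-p-1} - 1\right] \ge 0$ on $[0,1)$. Applying this with $x = \delta_k\beta_k^p$ yields
\[
\beta_{k+1}^{-p} \ge \beta_k^{-p}\left(1 + p\,\delta_k\beta_k^p\right) = \beta_k^{-p} + p\,\delta_k.
\]
Combining with the inductive hypothesis closes the induction:
\[
\beta_{k+1}^{-p} \ge \beta_0^{-p} + p\sum_{i=0}^{k-1}\delta_i + p\,\delta_k = \beta_0^{-p} + p\sum_{i=0}^{k}\delta_i.
\]

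I do not expect a serious obstacle here; the only delicate point is bookkeeping around edge cases. The inductive step implicitly uses $\beta_{k+1}>0$ and $\delta_k\beta_k^p<1$, but both situations where these fail (i.e., $\beta_{k+1}=0$, or a coefficient in the recurrence going non-positive so that $\beta_{k+1}$ must be $0$) are handled trivially since then the right-hand side of the claim is some positive number and the left-hand side is $0$. The substantive content is entirely in the one-line convexity inequality $(1-x)^{-p}\ge 1+px$, which is really a Bernoulli-type bound and is what converts a multiplicative recurrence on $\beta_k$ into an additive recurrence on $\beta_k^{-p}$.
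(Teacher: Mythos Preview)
Your argument is correct. The induction on $\beta_k^{-p}$, the use of the Bernoulli-type bound $(1-x)^{-p}\ge 1+px$ for $x\in[0,1)$, and the treatment of the degenerate case $\beta_{k+1}=0$ are all sound; this is precisely the standard way to convert the multiplicative recursion on $\beta_k$ into an additive one on $\beta_k^{-p}$. The paper itself does not supply a proof of this lemma---it is quoted from Borwein, Li, and Yao~\cite[Lemma 4.1]{BorwLY14}---so there is no in-paper proof to compare against, but your approach matches the argument given in that reference.
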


Our two central results, namely Theorem~\ref{thm.main} and Theorem~\ref{prop.conv.gral}, concern the sequence $\gap_k,\; k=0,1,\dots$ of {\em best duality gaps} defined as follows.  For $k=0,1,\dots$ let
\begin{equation}\label{eq.gap.k}
\gap_k := \min_{i=0,1,\dots,k} \gap(x_k,g_i)
\end{equation}
where $g_i=\nabla f(x_i)$ for $i=0,1,\dots$.  

Observe that $\gap_k = \gap(x_k,g^\text{best}_k)$ 
where $g^\text{best}_k$ is the best  of the dual iterates $g_i, \; i=0,1,\dots,k$.  A simple calculation shows that both $\gap_k$ and $g^\text{best}_k$ can be updated as follows: $\gap_0 = \gap(x_0,g_0)$ and for $k=0,1,\dots$
\[
\gap_{k+1} = \min\{\gap(x_{k+1},g_{k+1}),\gap_k + (f+\Psi)(x_{k+1}) - (f+\Psi)(x_k) \},
\]
and
\[
g^\text{best}_{k+1} = \left\{ \begin{array}{rl} g_{k+1} & \text{ if } \gap(x_{k+1},g_{k+1})<\gap_k + (f+\Psi)(x_{k+1}) - (f+\Psi)(x_k) \\
g^\text{best}_{k} & \text{ otherwise.}
\end{array}\right.
\]

\begin{theorem}\label{thm.main} Suppose Algorithm~\ref{algo.CG} chooses $\theta_k\in [0,1]$ via~\eqref{eq.stepsize} and  $q > 1$ and  $r\in [0,1]$ are such that
$(\D,\gap)$ satisfy the $(q,r)$-growth property~\eqref{eq.growth.cond} for some finite $M>0$. 
Then for $k=0,1,\dots$
\begin{equation}\label{eq.recurrence}
\gap_{k+1} \le \gap_k\left(1- \frac{q-1}{q} \cdot \min\left\{1,\left(\frac{\gap_k^{1-r}}{M}\right)^\frac{1}{q-1} \right\}\right).
\end{equation}
When $r=1$ we have linear convergence
\begin{equation}\label{eq.linear}
\gap_k\le\gap_0\left(1-\frac{q-1}{q}\cdot\min\left\{1,\frac{1}{M^{\frac{1}{q-1}}}\right\} \right)^k.
\end{equation}
When $r\in[0,1)$ we have an initial linear convergence regime
\begin{equation}\label{eq.linear.initial}
\gap_{k} \le \gap_0\left(1-\frac{q-1}{q} \right)^{k}, \; k=0,1,2,\dots,k_0
\end{equation}
where $k_0$ is the smallest $k$ such that 
$\gap_k^{1-r} \le M$.  Then for $k\ge k_0$ we have a sublinear convergence regime
\begin{equation}\label{eq.sublinear}
\gap_k\le\left(\gap_{k_0}^\frac{r-1}{q-1}+\frac{1-r}{q}\cdot\frac{1}{M^{\frac{1}{q-1}}} \cdot (k-k_0)\right)^\frac{q-1}{r-1}.
\end{equation}

\end{theorem}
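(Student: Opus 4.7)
The plan is to distill Theorem~\ref{thm.main} into the one-step recurrence~\eqref{eq.recurrence} and then specialize to the stated regimes. The three ingredients are the gap reduction identity of Proposition~\ref{prop.gap.reduction}, the line-search rule~\eqref{eq.stepsize}, and the $(q,r)$-growth property~\eqref{eq.growth.cond}.

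Write $A_k:=\gap(x_k,g_k)$. Proposition~\ref{prop.gap.reduction} gives $\gap(x_{k+1},g_k)=(1-\theta_k)A_k+\D(x_k,s_k,\theta_k)$, and~\eqref{eq.stepsize} picks $\theta_k$ minimizing the right-hand side. Using the growth property to majorize $\D(x_k,s_k,\theta)\le M\theta^q A_k^r/q$ and then minimizing the resulting expression $(1-\theta)A_k+M\theta^q A_k^r/q$ over $\theta\in[0,1]$ is an elementary one-variable problem whose unconstrained optimizer is $\theta^\star=(A_k^{1-r}/M)^{1/(q-1)}$, truncated to $1$ when $\theta^\star>1$. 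A short case analysis yields
\[
A_k-\gap(x_{k+1},g_k)\;\ge\;\frac{q-1}{q}\,A_k\,\min\!\left\{1,\left(\frac{A_k^{1-r}}{M}\right)^{1/(q-1)}\right\}=:\Delta(A_k).
\]

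The bridge from this single-dual decrease to a decrease in the best-gap $\gap_{k+1}$ rests on the key observation that the dual-dependent terms in $\gap$ cancel across iterates: for every fixed $i$,
\[
\gap(x_{k+1},g_i)-\gap(x_k,g_i)=(f+\Psi)(x_{k+1})-(f+\Psi)(x_k)=\gap(x_{k+1},g_k)-A_k\;\le\;-\Delta(A_k).
\]
Taking the minimum over $i\le k$ gives $\gap_{k+1}\le\gap_k-\Delta(A_k)$. Since $\gap_k\le A_k$ by the definition of $\gap_k$ as a minimum and the map $A\mapsto A\min\{1,(A^{1-r}/M)^{1/(q-1)}\}$ is nondecreasing in $A\ge 0$ (checked separately on the branches $A^{1-r}\le M$ and $A^{1-r}\ge M$, where it is a positive power of $A$ and a linear function of $A$ respectively), $\Delta(A_k)\ge\Delta(\gap_k)$, which yields~\eqref{eq.recurrence}.

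The convergence rates then follow by iterating~\eqref{eq.recurrence}. For $r=1$ the min is the constant $\min\{1,M^{-1/(q-1)}\}$, so~\eqref{eq.linear} is immediate. For $r\in[0,1)$, while $\gap_k^{1-r}>M$ the min equals $1$ and the recurrence becomes $\gap_{k+1}\le\gap_k/q$, delivering~\eqref{eq.linear.initial}. Once $k\ge k_0$ the condition $\gap_k^{1-r}\le M$ is self-propagating since $\gap_k$ is nonincreasing, and the recurrence takes the form $\beta_{k+1}\le\beta_k(1-\delta\beta_k^p)$ with $\beta_k:=\gap_k$, $p:=(1-r)/(q-1)$, and $\delta:=(q-1)/(qM^{1/(q-1)})$; Lemma~\ref{lemma.recurrence} then gives~\eqref{eq.sublinear} after substituting back. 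The main obstacle throughout is the monotonicity bridge in the middle paragraph: the per-step decrease is naturally controlled by $A_k$, while the theorem targets the potentially smaller $\gap_k$, and the conclusion hinges on both the cancellation of dual-conjugate terms across iterates and the monotonicity of $\Delta$.
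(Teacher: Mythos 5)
Your proof is correct and takes essentially the same route as the paper's: the gap reduction identity plus the growth property to bound the line-search decrease, monotonicity to pass from $\gap(x_k,g_k)$ to the smaller $\gap_k$, and Lemma~\ref{lemma.recurrence} with $p=\frac{1-r}{q-1}$ for the sublinear regime. Your explicit cancellation step $\gap(x_{k+1},g_i)-\gap(x_k,g_i)=(f+\Psi)(x_{k+1})-(f+\Psi)(x_k)$ is in fact the justification the paper leaves implicit in its first inequality $\gap_{k+1}\le\gap_k-\theta_k\cdot\gap(x_k,g_k)+\D(x_k,s_k,\theta_k)$, so you have filled in a detail rather than deviated.
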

\begin{proof}
Identity~\eqref{eq.gap.reduction} in Proposition~\ref{prop.gap.reduction}  implies that
\[
\gap(x_{k+1},g_k) = (1- \theta_k) \gap(x_k,g_k) + \D(x_k,s_k,\theta_k).
\]
Thus~\eqref{eq.gap.k}, the stepsize choice~\eqref{eq.stepsize}, and growth property~\eqref{eq.growth.cond}  imply that
\begin{align*}
\gap_{k+1} &\le \gap_k - \theta_k \cdot\gap(x_k,g_k) + \D(x_k,s_k,\theta_k)
\\&=\gap_k + \min_{\theta\in[0,1]}\left\{-\theta\cdot\gap(x_k,g_k)+\D(x_k,g_k,\theta)  \right\}
\\&\le\gap_k+\min_{\theta\in[0,1]}\left\{-\theta\cdot\gap(x_k,g_k)+\frac{M\theta^{q}}{q}\cdot \gap(x_k,g_k)^r  \right\}
\\&\le
\gap_k\left(1- \frac{q-1}{q} \cdot \min\left\{1,\left(\frac{\gap_k^{1-r}}{M}\right)^\frac{1}{q-1} \right\}\right).
\end{align*}
The last step above follows because the minimum in the second to last step is attained at
\[
\hat \theta = \min\left\{1,\left(\frac{\gap(x_k,g_k)^{1-r}}{M}\right)^\frac{1}{q-1}\right\}
 \ge \min\left\{1,\left(\frac{\gap_k^{1-r}}{M}\right)^\frac{1}{q-1}\right\}.
\]
Thus $M\hat \theta^{q} \cdot \gap(x_k,g_k)^r \le \hat\theta  \cdot \gap(x_k,g_k)$ and consequently
\begin{align*}
\gap_k& +\min_{\theta\in[0,1]}\left\{-\theta\cdot\gap(x_k,g_k)+\frac{M\theta^{q}}{q}\cdot \gap(x_k,g_k)^r  \right\} \\
&\le \gap_k - \hat \theta \cdot \gap(x_k,g_k) + \frac{1}{q}\cdot\hat \theta \cdot \gap(x_k,g_k) \\
&\le \gap_k\left(1- \frac{q-1}{q} \cdot\hat \theta \right)
\\
&\le \gap_k\left(1- \frac{q-1}{q}\cdot\min\left\{1,\left(\frac{\gap_k^{1-r}}{M}\right)^\frac{1}{q-1}\right\}\right).
\end{align*}

Therefore~\eqref{eq.recurrence} is established. 
Inequality~\eqref{eq.recurrence}  automatically implies~\eqref{eq.linear} when $r=1$ and also~\eqref{eq.linear.initial} for $k\le k_0$ when $r\in [0,1)$.  When $r\in[0,1)$ and $k\ge k_0$ inequality~\eqref{eq.sublinear} follows from~\eqref{eq.recurrence} and Lemma~\ref{lemma.recurrence} applied to $p:=\frac{1-r}{q-1}$ and $\beta_k:=\gap_k, \delta_k:=\frac{q-1}{q}\cdot\frac{1}{M^{\frac{1}{q-1}}}$ for $k=k_0,k_0+1,\dots$. 
\end{proof}

Theorem~\ref{thm.main} shows that for fixed $q > 1$, the value of $r\in[0,1]$ in the $(q,r)$-growth property~\eqref{eq.growth.cond} yields a convergence rate for $\gap_k\rightarrow 0$ somewhere in the spectrum from the sublinear rate $\Oh(1/k^{q-1})$ when $r=0$ to a linear rate when $r=1$.  The convergence rate increases as $r$ increases to $1$.  For fixed $r\in (0,1)$ the converge rate also increases as the parameter $q>1$ in the $(q,r)$-growth property~\eqref{eq.growth.cond} increases. Section~\ref{sec.growth.examples} below elaborates on these observations and other  interesting consequences of Theorem~\ref{thm.main}.

We next describe a more flexible alternative to~\eqref{eq.stepsize} to choose the stepsize $\theta_k$.  Suppose $c,\rho \in (0,1)$.  A simple backtracking procedure can easily choose $\theta_k\in[0,1]$ such that $\rho\cdot \hat \theta_k\le \theta_k \le \hat \theta_k$ for  
\begin{equation}\label{eq.stepsize.relax}
\hat \theta_k := \max\left\{\theta\in[0,1]:(1-\theta)\gap(x_k,g_k)+ \D(x_k,s_k,\theta) \le (1-c\cdot \theta)\gap(x_k,g_k)\right\}.
\end{equation}
The inequality in~\eqref{eq.stepsize.relax} can be seen as an {\em Armijo rule} condition for the function $\varphi:[0,1]\rightarrow \R$ defined as $\varphi(\theta):= (1-\theta)\gap(x_k,g_k)+ \D(x_k,s_k,\theta)$.  Indeed, a simple calculation shows that $\varphi(0) = \gap(x_k,g_k)$ and the right derivative of $\varphi$ at zero satisfies
$$\varphi'_+(0) \le -\gap(x_k,g_k)<0.$$ Therefore the inequality in~\eqref{eq.stepsize.relax} can be rewritten in the usual Armijo rule format~\cite[Chapter 3]{NoceW06}:
\[
\varphi(\theta) \le \varphi(0) + \tilde c \cdot \theta \cdot\varphi'_+(0).
\]
for some $\tilde c \le c$.
The Armijo rule condition~\eqref{eq.stepsize.relax} is evidently easier to satisfy, and therefore more flexible, than the exact line-search~\eqref{eq.stepsize} which corresponds to choosing $\theta_k:=\argmin_{\theta \in [0,1]} \varphi(\theta)$.

\medskip

A straightforward modification of the proof of Theorem~\ref{thm.main} yields the following result.

\begin{theorem}\label{prop.conv.gral}
Suppose $c, \rho\in (0,1)$ are such that $c+\rho > 1$ and Algorithm~\ref{algo.CG} chooses $\theta_k\in [0,1]$ so that $\rho\cdot \hat \theta_k \le \theta_k \le \hat \theta_k$ for $\hat \theta_k$ as in~\eqref{eq.stepsize.relax}. 
Suppose $q > 1$ and  $r\in [0,1]$ are such that
 $(\D,\gap)$ satisfy the $(q,r)$-growth property~\eqref{eq.growth.cond} for some finite $M>0$.
Then for $k=0,1,\dots$
\begin{equation}\label{eq.recurrence.gral}
\gap_{k+1} \le 
\gap_k\left(1-(c+\rho -1)\cdot\min\left\{1,\left(\frac{q(1-c)}{M}\cdot\gap_k^{1-r}\right)^\frac{1}{q-1}\right\}\right). 
\end{equation}
When $r=1$ we have linear convergence
\begin{equation}\label{eq.linear.gral}
\gap_k\le\gap_0\left(1-(c+\rho -1)\cdot 
\min\left\{1,
\left(\frac{ q(1-c)}{M}\right)^\frac{1}{q-1}\right\} \right)^k.
\end{equation}
When $r\in[0,1)$ we have an initial linear convergence regime
\begin{equation}\label{eq.linear.gral.initial}
\gap_{k} \le \gap_0\left(1-(c+\rho-1)\right)^{k}, \; k=0,1,2,\dots,k_0
\end{equation}
where $k_0$ is the smallest $k$ such that $\gap_k^{1-r} \le \frac{M}{q(1-c)}$.  Subsequently for $k\ge k_0$ we have a sublinear convergence regime
\begin{equation}\label{eq.sublinear.gral}
\gap_k\le\left(\gap_{k_0}^\frac{r-1}{q-1}+\frac{(1-r)(c+\rho -1)}{q-1}\cdot\left(\frac{q(1-c)}{ M}\right)^\frac{1}{q-1}\cdot (k-k_0) \right)^\frac{q-1}{r-1}.
\end{equation}
\end{theorem}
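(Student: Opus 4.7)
The plan is to follow the proof of Theorem~\ref{thm.main} step by step, with only the stepsize-dependent calculations adapted to the relaxed scheme~\eqref{eq.stepsize.relax}. The one genuinely new ingredient I would add is a growth-property-based lower bound on the Armijo threshold $\hat\theta_k$; once that is available, the rest of the argument — obtaining the one-step recurrence~\eqref{eq.recurrence.gral} and then reading off the linear and sublinear regimes — is essentially a verbatim transcription of the corresponding steps in the proof of Theorem~\ref{thm.main}.

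First, I would set up the recurrence exactly as in Theorem~\ref{thm.main}. Let $i^*\in\{0,\dots,k\}$ achieve the minimum defining $\gap_k$. Since $i^*\in\{0,\dots,k+1\}$ we have $\gap_{k+1}\le\gap(x_{k+1},g_{i^*})$, and a short computation (using that $f^*(g_{i^*})+\Psi^*(-g_{i^*})$ is independent of the primal iterate) combined with Proposition~\ref{prop.gap.reduction} gives
\[
\gap(x_{k+1},g_{i^*}) = \gap_k - \theta_k\gap(x_k,g_k) + \D(x_k,s_k,\theta_k).
\]
Next, I would show that the $(q,r)$-growth property forces
\[
\hat\theta_k \;\ge\; \bar\theta \;:=\; \min\left\{1,\left(\frac{q(1-c)\,\gap(x_k,g_k)^{1-r}}{M}\right)^{1/(q-1)}\right\}.
\]
Indeed, the Armijo condition in~\eqref{eq.stepsize.relax} simplifies to $\D(x_k,s_k,\theta)\le(1-c)\theta\gap(x_k,g_k)$, which by~\eqref{eq.growth.cond} is implied by $\theta^{q-1}\le q(1-c)\gap(x_k,g_k)^{1-r}/M$; the convexity observation after~\eqref{eq.stepsize.relax} tells us the Armijo-feasible set is an interval $[0,\hat\theta_k]$, so the sufficient threshold above, truncated at $1$, is a valid lower bound on $\hat\theta_k$.

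With these two ingredients in hand, the Armijo condition at $\theta_k$ yields $\D(x_k,s_k,\theta_k)\le(1-c)\theta_k\gap(x_k,g_k)$, while $\theta_k\ge\rho\hat\theta_k\ge\rho\bar\theta$, so combining with the first display I get
\[
\gap_{k+1} \;\le\; \gap_k - c\theta_k\gap(x_k,g_k) \;\le\; \gap_k - c\rho\bar\theta\,\gap(x_k,g_k) \;\le\; \gap_k - (c+\rho-1)\bar\theta\,\gap_k,
\]
where the last inequality uses the elementary $c\rho\ge c+\rho-1$ (equivalent to $(1-c)(1-\rho)\ge 0$) together with $\gap(x_k,g_k)\ge\gap_k$. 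Replacing $\bar\theta$ by the smaller quantity $\min\{1,(q(1-c)\gap_k^{1-r}/M)^{1/(q-1)}\}$, which is legitimate since the expression inside the minimum is monotone in its argument when $r\le 1$, then yields precisely~\eqref{eq.recurrence.gral}.

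Finally, the three rate statements follow mechanically from~\eqref{eq.recurrence.gral}, exactly as in Theorem~\ref{thm.main}: \eqref{eq.linear.gral} is the $r=1$ specialization; \eqref{eq.linear.gral.initial} corresponds to iterations with $\gap_k^{1-r}>M/(q(1-c))$, where the minimum in~\eqref{eq.recurrence.gral} equals $1$; and~\eqref{eq.sublinear.gral} follows from Lemma~\ref{lemma.recurrence} applied with $p=(1-r)/(q-1)$, $\beta_k=\gap_k$, and $\delta_k=\frac{c+\rho-1}{q-1}(q(1-c)/M)^{1/(q-1)}$ for $k\ge k_0$. The main obstacle is really just the derivation of the lower bound on $\hat\theta_k$; the rest, including the algebraic simplification $c\rho\ge c+\rho-1$ that produces the clean factor $(c+\rho-1)$ in the statement, is direct transcription from the proof of Theorem~\ref{thm.main}.
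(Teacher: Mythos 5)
Your proposal follows the paper's own proof essentially step by step: the same use of the gap-reduction identity of Proposition~\ref{prop.gap.reduction} together with definition~\eqref{eq.gap.k} to obtain $\gap_{k+1}\le\gap_k-\theta_k\gap(x_k,g_k)+\D(x_k,s_k,\theta_k)$, the same growth-property argument lower-bounding the Armijo threshold $\hat\theta_k$ by $\min\bigl\{1,\bigl(\tfrac{q(1-c)}{M}\gap_k^{1-r}\bigr)^{\frac{1}{q-1}}\bigr\}$, and the same extraction of the three regimes via Lemma~\ref{lemma.recurrence}. The only presentational difference is how the factor $(c+\rho-1)$ appears: the paper bounds $-\theta_k\gap(x_k,g_k)\le-\rho\hat\theta_k\gap(x_k,g_k)$ and $(1-c)\theta_k\le(1-c)\hat\theta_k$ separately and combines them, whereas you keep $-c\theta_k\gap(x_k,g_k)$, use $\theta_k\ge\rho\hat\theta_k$, and then invoke $c\rho\ge c+\rho-1$. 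Both routes are valid and yield the same recurrence~\eqref{eq.recurrence.gral}; your explicit treatment of the index $i^*$ achieving $\gap_k$ and of the interval structure of the Armijo-feasible set (needed to assert the Armijo inequality at $\theta_k<\hat\theta_k$, which the paper uses silently) are welcome clarifications rather than deviations.

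One constant in your final step is off, however. For $k\ge k_0$ the recurrence~\eqref{eq.recurrence.gral} reads $\gap_{k+1}\le\gap_k\bigl(1-\delta\,\gap_k^{p}\bigr)$ with $p=\frac{1-r}{q-1}$ and $\delta=(c+\rho-1)\left(\frac{q(1-c)}{M}\right)^{\frac{1}{q-1}}$; there is no factor $\frac{1}{q-1}$ in $\delta$. The $\frac{1}{q-1}$ visible in~\eqref{eq.sublinear.gral} is produced by Lemma~\ref{lemma.recurrence} itself, whose conclusion involves $p\cdot\delta=\frac{(1-r)(c+\rho-1)}{q-1}\left(\frac{q(1-c)}{M}\right)^{\frac{1}{q-1}}$. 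With your stated choice $\delta_k=\frac{c+\rho-1}{q-1}\left(\frac{q(1-c)}{M}\right)^{\frac{1}{q-1}}$ the hypothesis of the lemma is not what~\eqref{eq.recurrence.gral} supplies (for $q<2$ your $\delta_k$ is strictly larger than the recurrence justifies, so the lemma cannot be invoked), and its conclusion would carry $(q-1)^2$ rather than $q-1$ in the relevant denominator, so~\eqref{eq.sublinear.gral} as stated would not follow. This is a transcription slip rather than a conceptual gap: replacing your $\delta_k$ by $(c+\rho-1)\left(\frac{q(1-c)}{M}\right)^{\frac{1}{q-1}}$, which is exactly what your own recurrence delivers, makes the argument correct and identical to the paper's.
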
 
\begin{proof}
As noted above, this is a modification of the proof of Theorem~\ref{thm.main}.
Identity~\eqref{eq.gap.reduction} in Proposition~\ref{prop.gap.reduction} yields
\[
\gap(x_{k+1},g_k) = (1-\theta_k)\gap(x_k,g_k) + \D(x_k,s_k,\theta_k).  
\]
Hence~\eqref{eq.gap.k} implies that
\[
\gap_{k+1} \le \gap_k - \theta_k\cdot \gap(x_k,g_k) + \D(x_k,s_k,\theta_k). 
\]
Since $\rho\cdot\hat \theta_k \le \theta_k \le \hat \theta_k$ for $\hat \theta_k$ as in~\eqref{eq.stepsize.relax}, it follows that
\begin{align}
\gap_{k+1} & \le \gap_k -\rho \cdot\hat \theta_k \cdot\gap(x_k,g_k) + \D(x_k,s_k,\theta_k)  \notag\\
&\le \gap_k - \rho \cdot\hat\theta_k\cdot \gap(x_k,g_k) + (1-c)\cdot\theta_k\cdot\gap(x_k,g_k) \notag\\
&\le  \gap_k - (c+\rho -1)\cdot\hat\theta_k\cdot\gap(x_k,g_k) \notag\\
& \le \gap_k(1 - (c+\rho-1)\cdot\hat \theta_k). \label{eq.gral.step1}
\end{align}
The growth property~\eqref{eq.growth.cond} and~\eqref{eq.gap.k} imply that
the minimizer $\hat \theta_k$ in~\eqref{eq.stepsize.relax} satisfies
\begin{align}
\hat \theta_k &= \max\left\{\theta\in[0,1]: \D(x_k,s_k,\theta) \le (1-c)\cdot\theta\cdot\gap(x_k,g_k)\right\} 
\notag\\
&\ge \max\left\{\theta\in[0,1]: \frac{M \theta^q}{q}\gap(x_k,g_k)^r \le (1-c)\cdot\theta\cdot\gap(x_k,g_k)\right\} 
\notag\\
&= \min\left\{1,\left(\frac{q(1-c)}{M}\cdot\gap(x_k,g_k)^{1-r}\right)^\frac{1}{q-1}\right\}\notag\\
&\ge \min\left\{1,\left(\frac{q(1-c)}{M}\cdot\gap_k^{1-r}\right)^\frac{1}{q-1}\right\}.\label{eq.gral.step2}
\end{align}
Inequality~\eqref{eq.recurrence.gral} follows by putting together~\eqref{eq.gral.step1} and~\eqref{eq.gral.step2}.
Inequality~\eqref{eq.recurrence.gral}  automatically implies~\eqref{eq.linear.gral} when $r=1$ and also~\eqref{eq.linear.gral.initial} for $k\le k_0$ when $r\in [0,1)$.  When $r\in[0,1)$ and $k\ge k_0$ inequality~\eqref{eq.sublinear.gral} follows from~\eqref{eq.recurrence.gral} and Lemma~\ref{lemma.recurrence} applied to $p:=\frac{1-r}{q-1}$ and $\beta_k:=\gap_k, \delta_k:=(c+\rho-1)\left(\frac{q(1-c)}{M}\right)^{\frac{1}{q-1}}$ for $k=k_0,k_0+1,\dots$. 
\end{proof}

We note that expression~\eqref{eq.recurrence.gral} in Theorem~\ref{prop.conv.gral} is identical to expression~\eqref{eq.recurrence} in Theorem~\ref{thm.main} for the ideal choice $\rho = 1$ and $c = (q-1)/q$.  Likewise for~\eqref{eq.linear.gral},~\eqref{eq.sublinear.gral} and~\eqref{eq.linear},~\eqref{eq.sublinear}.

\bigskip

To conclude this section, we next discuss the norm-independence and affine invariance of our developments.  The growth property~\eqref{eq.growth.cond} as well as Theorem~\ref{thm.main} and Theorem~\ref{prop.conv.gral} are automatically norm-independent since they do not rely on any norms.  They are also affine invariant as we next detail.

Consider an affine {\em reparametrization} of $\R^n$ of the form $x:=A\tilde x + b$ where $A:\R^n\rightarrow \R^n$ is a linear bijection.  Let $\tilde f, \tilde \Psi:\R^n\rightarrow \R\cup\{\infty\}$ be defined as the corresponding reparametrizations of $f$ and $\Psi$, namely
\[
\tilde f(\tilde x):= f(A\tilde x + b) \; \text{ and } \; \tilde \Psi(\tilde x):= \Psi(A\tilde x + b).
\]
Let $A^*:\R^n\rightarrow \R^n$ denote the adjoint of $A$.  Some straightforward calculations show that if $x = A\tilde x+b$ then
\[
\nabla \tilde f(\tilde x) = A^* \nabla f(x) \text{  and  } \tilde f^*(A^*u) = f^*(u) - \ip{b}{u} \text{ for all } u\in \R^n,
\]
and likewise for $\Psi$ in lieu of $f$.  Consequently, if $x = A\tilde x+b$ then
\[
\partial \Psi^*(-A^*\nabla f(x)) = A\partial \tilde \Psi^*(-\nabla \tilde f(\tilde x)) + b.
\]
Hence if $x = A\tilde x + b$ then for $g:=\nabla f(x)$ and  $\tilde g := \nabla \tilde f(\tilde x)$ we have $\tilde g = A^*g$ and $\partial \Psi^*(-g) = A\partial \Psi^*(-\tilde g)+b$.  Thus we can assume that the oracle in Assumption A2 
yield respectively $s\in\partial \Psi^*(-g)$ and $\tilde s\in\partial \tilde \Psi^*(-\tilde g)$ that satisfy $s = A\tilde s+b$.
The identities $x=A\tilde x+b, \; \tilde g = A^*g, \; s = A\tilde s+b$ in turn imply that
\[
\gap(x,g) = \widetilde{\gap}(\tilde x,\tilde g)\]
 and 
 \[ \D(x,s,\theta) = \widetilde{\D}(\tilde x,\tilde s,\theta) \text{ for all } \theta \in [0,1].
\]  
It thus follows that the growth property~\eqref{eq.growth.cond}, stepsize selection procedures~\eqref{eq.stepsize} and~\eqref{eq.stepsize.relax}, as well as Theorem~\ref{thm.main} and Theorem~\ref{prop.conv.gral} are all affine invariant.

\section{Growth  property in some special cases}
\label{sec.growth.examples}
The subsections below detail some straightforward consequences of Theorem~\ref{thm.main} when the growth property holds for some special values of $(q,r)$.  These special cases recover and sharpen some norm-dependent convergence results previously developed  in the state-of-the-art literature on the conditional gradient method~\cite{Kerd21,Nest18,Ghad19}.   To make the latter connection more precise and explicit, Proposition~\ref{prop.suff.r=0}, Proposition~\ref{prop.suff.2.1}, and Proposition~\ref{prop.suff.gral} below show that the uniform smoothness and uniform convexity assumptions previously made in each of~\cite{Kerd21,Nest18,Ghad19} imply the $(q,r)$ growth property~\eqref{eq.growth.cond} for various values of $(q,r)$.

We next recall the concepts of uniform smoothness and uniform convexity.
A detailed discussion on these concepts is presented in~\cite{KerdDP21}.  Both  uniform smoothness and uniform convexity are stated and formalized in terms of some norm.  Thus throughout this section we assume that $\R^n$ is endowed with a norm $\|\cdot\|$ and let $\|\cdot\|^*$ denote its dual norm.  

A convex function $f:\R^n \rightarrow \R\cup\{\infty\}$ is {\em uniformly smooth} on $C\subseteq \dom(f)$ with respect to $\|\cdot\|$ if there exist constants $q\in(1,2]$ and $L>0$ such that for $x,y\in C$ 
\begin{equation}\label{eq.unif.smooth}
f(x+\theta(y-x)) \ge (1-\theta)f(x) + \theta f(y) - \frac{L}{q} \theta(1-\theta) \|y-x\|^q \text{ for all } \theta \in [0,1]. 
\end{equation}
It is customary to say that $f$ is $L$-smooth on $C$ when \eqref{eq.unif.smooth} holds with $q=2$.  In particular, {\em uniform smoothness} is a relaxation of {\em smoothness.}
It is easy to see that~\eqref{eq.unif.smooth} holds for $q=\nu+1$ when $\nabla f$ is $\nu$-H\"older continuous on $C$ with H\"older constant $L$.   In particular, $f$ is $L$-smooth on $C$ when $\nabla f$ is Lipschitz continuous on $C$ with Lipschitz constant $L$.

As detailed in~\cite{KerdDP21}, when $f$ is differentiable, property~\eqref{eq.unif.smooth} implies that for all $ x,y\in C$
\begin{equation}\label{eq.unif.smooth.implied}
D_f(y,x) \le \frac{L}{q}\|y-x\|^q. 
\end{equation}
A function $\Psi:\R^n \rightarrow \R\cup\{\infty\}$ is {\em uniformly convex} on $C\subseteq \dom(\Psi)$ with respect to $\|\cdot\|$  if there exist constants $p\ge 2$ and $\mu>0$ such that for all $x,y\in C$ and $\theta \in [0,1]$
\begin{equation}\label{eq.unif.convex}
\Psi(x+\theta(y-x)) \le (1-\theta)\Psi(x) + \theta \Psi(y) - \frac{\mu}{p} \theta(1-\theta) \|y-x\|^p \text{ for all } \theta \in [0,1]. 
\end{equation}
It is customary to say that $\Psi$ is strongly convex on $C$ when ~\eqref{eq.unif.convex} holds for $p=2$.  In particular, {\em uniform convexity} is a relaxation of {\em strong convexity.}

Again, as detailed in~\cite{KerdDP21}, property~\eqref{eq.unif.convex} implies that for all $x,y\in C$ and $g\in \partial \Psi(y)$ 
\[
\Psi(x) \ge \Psi(y) + \ip{g}{s-y} + \frac{\mu}{p}\|x-y\|^p.
\]
In particular,  property~\eqref{eq.unif.convex} implies that if $C = \dom(\Psi)$ and $s = \argmin_y \Psi(y)$ then for all $x\in \dom(\Psi)$
\begin{equation}\label{eq.unif.convex.implied}
\Psi(x) - \Psi(s) \ge \frac{\mu}{p}\|x-s\|^p.
\end{equation}

A closed convex set $C\subseteq\R^n$ is {\em uniformly convex} if there exist constants $p\ge 2$ and $\mu>0$ such that for all $x,y\in C,$  $\theta \in [0,1]$, and $z\in \R^n$ with $\|z\|\le 1$
\begin{equation}\label{eq.unif.convex.set}
x+\theta(y-x) + \frac{\mu}{p}\theta(1-\theta)\|y-x\|^p z \in C.
\end{equation}
It is customary to say that $C$ is strongly convex when~\eqref{eq.unif.convex.set} holds for $p=2.$

Similar to the above implications~\eqref{eq.unif.smooth} $\Rightarrow$~\eqref{eq.unif.smooth.implied} and~\eqref{eq.unif.convex} $\Rightarrow$~\eqref{eq.unif.convex.implied}, property~\eqref{eq.unif.convex.set} implies that if $g\in \R^n$ and $s = \argmin_{y\in C} \ip{g}{y}$ then for all $x\in C$ 
\begin{equation}\label{eq.unif.convex.set.implied}
\ip{g}{x-s} \ge \frac{\mu}{p}  \|g\|^* \cdot \|x-s\|^p.
\end{equation}

\subsection{The case $q>1,\; r=0$: sublinear convergence}
Suppose the growth property~\eqref{eq.growth.cond} holds for $q>1$ and $r =0,$ that is, for $x\in \dom(\Psi), s \in \partial \Psi^*(-\nabla f(x))$
\begin{equation}\label{eq.r=0}
\D(x,s,\theta) \le \frac{M\theta^{q}}{q} \text{ for all } \theta \in [0,1].
\end{equation}
In this case $\gap_1\le M/q\le M$ and thus Theorem~\ref{thm.main} implies that
\begin{align}\label{eq.conv.0}
\gap_k &\le
\left(\gap_1^{-\frac{1}{q-1}} + \frac{1}{q}\cdot \frac{k-1}{M^\frac{1}{q-1}}\right)^{-(q-1)} \notag\\
&\le
\left(
\frac{q^\frac{1}{q-1}}{M^\frac{1}{q-1}} 
+ \frac{1}{q}\cdot \frac{k-1}{M^\frac{1}{q-1}}
\right)^{-(q-1)} \notag\\
&= M\left(\frac{q}{k-1+q^\frac{q}{q-1}}\right)^{q-1}.
\end{align}
The case $(q,r) = (2,0)$ is particularly interesting.  In this case we get
\begin{equation}\label{eq.classic}
\gap_k \le \frac{2M}{k+3}.
\end{equation}
This bound applies in the especially important and popular case when $\Psi$ is the indicator function $\delta_C$ of some compact convex set $C\subseteq \R^n$ as discussed in~\cite{Jagg13,FreuG16}. In this case for $x\in \dom(\Psi), s \in \partial \Psi^*(-\nabla f(x))$
 we have
 \[
 \D(x,s,\theta) = D_f(x+\theta(s-x),x) \text{ for all } \theta\in [0,1]
 \]
and thus~\eqref{eq.r=0} holds with $q=2$ and $M$ equal to the following {\em curvature constant} defined by Jaggi~\cite{Jagg13}, provided that it is finite:
\begin{equation}\label{eq.jaggi}
\mathcal C_f := \sup_{x,s\in C\atop \theta\in (0,1]}\frac{2 D_f(x+\theta(s-x),x)}{\theta^2}.
\end{equation}
Therefore~\eqref{eq.classic} recovers the iconic 
convergence rate of the conditional gradient method established in~\cite{Jagg13}.  However, Theorem~\ref{thm.main} reveals that the convergence could be faster as~\eqref{eq.r=0} may hold for $M\le \mathcal C_{f}$.

\medskip

The following proposition describes an important class of problems that satisfies~\eqref{eq.r=0}.  We rely on the following norm-dependent concept. Suppose $\|\cdot\|$ is a norm in $\R^n$ and $C\subseteq \R^n$ is nonempty and bounded.  The {\em diameter} of $C$ is
\[
\diam(C):=\sup_{x,y\in C} \|x-y\|.
\]
\begin{proposition}\label{prop.suff.r=0} The $(q,0)$-growth property~\eqref{eq.r=0} holds if $f$ is $q$-uniformly smooth and $\dom(\Psi)$ is bounded.  More precisely, the $(q,0)$-growth property~\eqref{eq.r=0} holds for $M=L\cdot\diam(\dom(\Psi))^q$ if there is a norm $\|\cdot\|$ in $\R^n$ such that
\eqref{eq.unif.smooth} holds for $x,y\in \dom(\Psi)$.
\end{proposition}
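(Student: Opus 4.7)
The plan is to unfold the definition of $\D(x,s,\theta)$ and bound its two pieces separately: the Bregman-distance piece via the uniform smoothness of $f$, and the $\Psi$-piece via the convexity of $\Psi$.

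First I would recall that, by definition,
\[
\D(x,s,\theta) = D_f(x+\theta(s-x),x) + \Psi(x+\theta(s-x)) - (1-\theta)\Psi(x) - \theta\Psi(s).
\]
For the Bregman-distance piece, since $f$ is $q$-uniformly smooth on $\dom(\Psi)$ with constant $L$ and since $x,s \in \dom(\Psi)$ implies $x+\theta(s-x)\in\dom(\Psi)$ by convexity, the implication~\eqref{eq.unif.smooth.implied} yields
\[
D_f(x+\theta(s-x),x) \le \frac{L}{q}\|\theta(s-x)\|^q = \frac{L\theta^q}{q}\|s-x\|^q.
\]
Using $\|s-x\| \le \diam(\dom(\Psi))$, this gives $D_f(x+\theta(s-x),x) \le \frac{M\theta^q}{q}$ with $M = L\cdot\diam(\dom(\Psi))^q$.

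Second, for the $\Psi$-piece, the convexity of $\Psi$ directly gives
\[
\Psi(x+\theta(s-x)) \le (1-\theta)\Psi(x) + \theta\Psi(s),
\]
so $\Psi(x+\theta(s-x)) - (1-\theta)\Psi(x) - \theta\Psi(s) \le 0$. Adding the two bounds yields $\D(x,s,\theta)\le \frac{M\theta^q}{q}$ for all $\theta\in[0,1]$, which is exactly the $(q,0)$-growth property~\eqref{eq.r=0}.

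There is no genuine obstacle here; the result is essentially a one-line consequence of~\eqref{eq.unif.smooth.implied}, convexity of $\Psi$, and the diameter bound $\|s-x\|\le \diam(\dom(\Psi))$. The only thing worth noting is that one needs $x+\theta(s-x)\in\dom(\Psi)$ in order to invoke the uniform smoothness inequality on the correct set, but this is immediate from the convexity of $\dom(\Psi)$ together with $x,s\in\dom(\Psi)$ and $\theta\in[0,1]$.
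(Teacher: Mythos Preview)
Your proof is correct and follows essentially the same approach as the paper: bound the $\Psi$-piece above by zero using convexity, then bound the Bregman-distance piece via~\eqref{eq.unif.smooth.implied} and the diameter. The paper's write-up simply compresses the first step into the single inequality $\D(x,s,\theta)\le D_f(x+\theta(s-x),x)$, but the content is identical.
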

\begin{proof}
Suppose~\eqref{eq.unif.smooth} holds for some constants $q>1$ and $L >0$. Then~\eqref{eq.def.D} and~\eqref{eq.unif.smooth.implied} imply that for all $x,s\in \dom(\Psi)$ and $\theta\in [0,1]$
\[
\D(x,s,\theta) \le D_f(x+\theta(s-x),x) \le \frac{L }{q} \|\theta(s-x)\|^q = \frac{L\|s-x\|^q }{q}\cdot \theta^q.
\] 
Thus~\eqref{eq.r=0} holds for $M=L\cdot\diam(\dom(\Psi))^q$.
\end{proof}

Proposition~\ref{prop.suff.r=0} and~\eqref{eq.conv.0} readily yield the following norm-dependent convergence result for a class of problems previously considered in~\cite{Ghad19,Nest18}. Suppose $\dom(\Psi)$ is bounded and $\nabla f$ is $\nu$-H\"older continuous on $\dom(\Psi)$ for some $\nu > 0$.  In this case $f$ is $(\nu+1)$-uniformly smooth on $\dom(\Psi)$. Thus Proposition~\ref{prop.suff.r=0} implies that the $(q,0)$-growth property~\eqref{eq.r=0} holds for $q = \nu+1$ and $M = L \cdot \diam(\dom(\Psi))^{1+\nu}$ where $L$ is the H\"older continuity constant of $\nabla f$.  Therefore~\eqref{eq.conv.0} implies that
\begin{align}\label{eq.nu.conv}
\gap_k &\le L \cdot \diam(\dom(\Psi))^{1+\nu}\left(\frac{\nu+1}{k-1+(\nu+1)^\frac{\nu+1}{\nu}}\right)^{\nu} \notag \\
&\le L \cdot \diam(\dom(\Psi))^{1+\nu}\left(\frac{\nu+1}{k}\right)^{\nu}.
\end{align}
Therefore we recover the $\Oh(1/k^\nu)$ convergence results shown in~\cite[Corollary 1]{Nest18} and in~\cite[Corollary 1]{Ghad19} with an improvement on the latter as~\eqref{eq.nu.conv} does not include the additional log factor in~\cite[Corollary 1 and Equation (2.30)]{Ghad19}.  The constant in front of $1/k^\nu$ in~\eqref{eq.nu.conv} is simpler and sharper than the constants in~\cite[Corollary 1]{Nest18} and in~\cite[Equation (2.30)]{Ghad19}.  Furthermore, Theorem~\ref{thm.main} reveals that the convergence is generally faster 
as Proposition~\ref{prop.suff.r=0} implies that~\eqref{eq.r=0} and~\eqref{eq.conv.0} will typically hold for $M\le L \cdot \diam(\dom(\Psi))^{1+\nu}$.

\subsection{The case $(q,r)= (2,1)$: linear convergence}
\label{sec.2.1}
Suppose the growth property~\eqref{eq.growth.cond} holds for $(q,r) = (2,1),$ that is, for $x\in \dom(\Psi), s \in \partial \Psi^*(-\nabla f(x))$
\begin{equation}\label{eq.q=2.r=1}
\D(x,s,\theta) \le \frac{M\theta^{2}}{2}\cdot {\gap}(x,g) \text{ for all } \theta \in [0,1].
\end{equation}
In this case Theorem~\ref{thm.main} readily implies that 
\begin{equation}\label{eq.conv.2.1}
\gap_k \le \gap_0\cdot
\left(1-\frac{1}{2}\cdot\min\left\{1,\frac{1}{M}\right\}\right)^{k}. 
\end{equation}

Proposition~\ref{prop.suff.2.1} below describes some important classes of problems that satisfies~\eqref{eq.q=2.r=1}. We omit the proof of Proposition~\ref{prop.suff.2.1} as it is a special case of the more general Proposition~\ref{prop.suff.gral} in Section~\ref{sec.other.q.r} below.

\begin{proposition}\label{prop.suff.2.1}
Suppose $f$ is $2$-uniformly smooth on $\dom(\Psi)$.
Then the $(2,1)$-growth property~\eqref{eq.q=2.r=1} holds if $\Psi$ is $2$-uniformly convex on $\dom(\Psi)$, or if $\Psi = \delta_C$ for a $2$-uniformly convex and closed set $C\subseteq \R^n$ and $\nabla f$ is bounded away from zero in $C$.  More precisely:
\begin{itemize}
\item[(a)] The $(2,1)$-growth property~\eqref{eq.q=2.r=1} holds for $M = 2L/\mu$ if there is a norm $\|\cdot\|$ in $\R^n$ such that both~\eqref{eq.unif.smooth} and~\eqref{eq.unif.convex} hold for $p=q=2$ and $x,y\in \dom(\Psi)$.
\item[(b)] The $(2,1)$-growth property~\eqref{eq.q=2.r=1} holds for $M = 2L/(\ell\mu)$ if $\Psi = \delta_C$ for a closed convex set $C\subseteq \R^n$, there is a norm $\|\cdot\|$ in $\R^n$ such that both~\eqref{eq.unif.smooth} and~\eqref{eq.unif.convex.set} hold for $p=q=2$ and $x,y\in C$, and 
\[
\ell := \inf_{x\in C} \|\nabla f(x)\|^* > 0.
\]
\end{itemize}
\end{proposition}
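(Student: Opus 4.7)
The plan is to verify the $(2,1)$-growth property~\eqref{eq.q=2.r=1} in each case by sandwiching: upper bound $\D(x,s,\theta)$ using the $2$-uniform smoothness of $f$, and lower bound $\gap(x,g)$ by $\|s-x\|^2$ using the appropriate uniform convexity hypothesis. Throughout, $g = \nabla f(x)$ and $s\in \partial\Psi^*(-g) = \argmin_y\{\ip{g}{y}+\Psi(y)\}$, and the Wolfe-gap form~\eqref{eq.wolfe.gap} gives $\gap(x,g) = \ip{g}{x-s} + \Psi(x)-\Psi(s)$.

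For part (a), I would first observe that since $\Psi$ is convex, $\Psi(x+\theta(s-x)) - (1-\theta)\Psi(x) - \theta\Psi(s)\le 0$, so $\D(x,s,\theta) \le D_f(x+\theta(s-x),x)$. Applying the smoothness implication~\eqref{eq.unif.smooth.implied} with $q=2$ yields $\D(x,s,\theta) \le \frac{L}{2}\theta^2\|s-x\|^2$. Next, since $s$ minimizes the $2$-uniformly convex function $y\mapsto \ip{g}{y}+\Psi(y)$ (it inherits uniform convexity from $\Psi$ with the same modulus $\mu$), property~\eqref{eq.unif.convex.implied} applied to this function gives $\ip{g}{x-s}+\Psi(x)-\Psi(s) \ge \tfrac{\mu}{2}\|x-s\|^2$, i.e.\ $\gap(x,g) \ge \tfrac{\mu}{2}\|s-x\|^2$. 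Combining the two bounds produces $\D(x,s,\theta) \le \tfrac{L\theta^2}{\mu}\gap(x,g) = \tfrac{(2L/\mu)\theta^2}{2}\gap(x,g)$, giving~\eqref{eq.q=2.r=1} with $M=2L/\mu$.

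For part (b), with $\Psi=\delta_C$, the three $\Psi$-terms in the definition~\eqref{eq.def.D} of $\D$ all vanish because $x,s\in C$ and hence $x+\theta(s-x)\in C$ by convexity. So $\D(x,s,\theta) = D_f(x+\theta(s-x),x) \le \tfrac{L}{2}\theta^2\|s-x\|^2$ by~\eqref{eq.unif.smooth.implied}. For the gap, $\gap(x,g) = \ip{g}{x-s}$ because the $\Psi$ contributions vanish. Since $s = \argmin_{y\in C}\ip{g}{y}$ and $C$ is $2$-uniformly convex, property~\eqref{eq.unif.convex.set.implied} yields $\ip{g}{x-s} \ge \tfrac{\mu}{2}\|g\|^*\|x-s\|^2 \ge \tfrac{\mu\ell}{2}\|x-s\|^2$, using the assumption $\|g\|^* \ge \ell$. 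Combining gives $\D(x,s,\theta) \le \tfrac{L\theta^2}{\mu\ell}\gap(x,g) = \tfrac{(2L/(\mu\ell))\theta^2}{2}\gap(x,g)$, the claim with $M = 2L/(\mu\ell)$.

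The main conceptual point — and the only place where the argument is not completely routine — is the lower bound on $\gap(x,g)$ in terms of $\|s-x\|^2$. In part (a) the key observation is that $s$ is the minimizer of a $2$-uniformly convex function, so the Wolfe-gap quantity $\ip{g}{x-s}+\Psi(x)-\Psi(s)$ is exactly the functional suboptimality at $x$, which is controlled from below by $\tfrac{\mu}{2}\|x-s\|^2$. In part (b) the analogous lower bound is the set-uniform-convexity inequality~\eqref{eq.unif.convex.set.implied}, where the gradient magnitude $\|g\|^*$ enters the bound and explains the extra factor $\ell$ in $M=2L/(\mu\ell)$. Everything else is direct substitution.
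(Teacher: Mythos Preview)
Your proof is correct and follows essentially the same approach as the paper, which defers to the general Proposition~\ref{prop.suff.gral} and specializes it to $p=q=2$: upper bound $\D(x,s,\theta)\le D_f(x+\theta(s-x),x)\le \tfrac{L}{2}\theta^2\|s-x\|^2$ via convexity of $\Psi$ and~\eqref{eq.unif.smooth.implied}, and lower bound $\gap(x,g)\ge \tfrac{\mu}{2}\|x-s\|^2$ (respectively $\tfrac{\ell\mu}{2}\|x-s\|^2$) via~\eqref{eq.unif.convex.implied} applied to the tilted function $\Psi+\ip{g}{\cdot}$ (respectively via~\eqref{eq.unif.convex.set.implied}).
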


Proposition~\ref{prop.suff.2.1} and~\eqref{eq.conv.2.1} readily yield the following norm-dependent linear convergence results for two classes of problems previously considered in~\cite{Ghad19,KerdL21,LeviP66}.

First, if $\nabla f$ is Lipschitz continuous on $\dom(\Psi)$ and $\Psi$ is strongly convex then Proposition~\ref{prop.suff.2.1}(a) implies that the $(2,1)$-growth property~\eqref{eq.q=2.r=1} holds with $M = L/\mu$ where $L$ and $\mu$ are respectively the Lipschitz continuity constant of $\nabla f$ and the strong convexity constant of $\Psi$. Therefore~\eqref{eq.conv.2.1} implies that
\[
\gap_k \le \gap_0 \cdot\left(1-
\frac{1}{2}\cdot\min\left\{1,\frac{\mu}{2L}\right\}
\right)^{k}. 
\]
Hence we recover the linear convergence result shown in~\cite[Corollary 1 and Equation (2.32)]{Ghad19} but with a simpler and sharper expression.  Furthermore, the convergence is generally faster as~\eqref{eq.q=2.r=1} and~\eqref{eq.conv.2.1} will typically hold for $M\le 2L/\mu$.

Second, if $\Psi=\delta_C$, $\nabla f$ is Lipschitz continuous and bounded away from zero on $C$, and $C\subseteq\R^n$ is a closed and strongly set then Proposition~\ref{prop.suff.2.1}(b) implies that the $(2,1)$-growth property~\eqref{eq.q=2.r=1} holds with $M = 2L/(\ell\mu)$ where $L,\mu,$ and $\ell$ are respectively the Lipschitz continuity constant of $\nabla f$, the strong convexity constant of $C$ and the lower bound on $\|\nabla f(x)\|^*$ on $C$.  Therefore~\eqref{eq.conv.2.1} implies that
\[
\gap_k \le \gap_0 \cdot\left(1-
\frac{1}{2}\cdot\min\left\{1,\frac{\ell\mu}{2L}\right\}
\right)^{k}. 
\]
Hence we recover the linear convergence result shown in~\cite[Theorem 6.1(5)]{LeviP66} and in~\cite[Theorem 2.2]{KerdL21} with a slightly improved convergence rate.  Theorem~\ref{thm.main} shows that the convergence rate is generally better as~\eqref{eq.q=2.r=1} and~\eqref{eq.conv.2.1} typically hold for $M\le 2L/(\ell\mu)$.

\subsection{Other values $(q,r)$: sublinear to linear convergence}
\label{sec.other.q.r}
Suppose the growth property~\eqref{eq.growth.cond} holds for some $q > 1,\; r\in(0,1),$ and $M >0$.  We next discuss the spectrum of convergence rates from sublinear to linear that Theorem~\ref{thm.main} yields depending on the values of $(q,r)$.  To simplify notation and exposition, we shall assume that $\gap_0 \le M$ throughout this subsection so that we skip the initial linear convergence regime~\eqref{eq.linear.initial} in Theorem~\ref{thm.main}.  This assumption can be relaxed either at the expense of some notational overhead to deal with the initial linear convergence regime~\eqref{eq.linear.initial}, or at the expense of artificially increasing the constant $M$.

Suppose $\gap_0 \le M$. Then for $r \in [0,1)$ Theorem~\ref{thm.main} implies that 
\begin{equation}\label{eq.conv.gral}
\gap_k  \le\left(\frac{1}{M^\frac{1-r}{q-1}}+\frac{1-r}{q}\cdot\frac{k}{M^{\frac{1}{q-1}}}\right)^\frac{q-1}{r-1}
= M\left(1+\frac{q-1}{q \cdot M^{\frac{r}{q-1}}}\cdot k \cdot \frac{1-r}{q-1}\right)^{-\frac{q-1}{1-r}}.
\end{equation}
For $q > 1$ fixed, the right-most expression in~\eqref{eq.conv.gral} decreases monotonically with $r$ from the sublinear rate 
$
M \left(\frac{q}{k+q}\right)^{q-1}
$
for $r=0$
to the following asymptotic linear rate for $r\uparrow 1$
\begin{equation}\label{eq.linear.conv}
M e^{-\rho k} = M \left(e^{-\rho}\right)^k,
\end{equation}
where $$\rho := \frac{q-1}{q\cdot M^{\frac{1}{q-1}}} \in (0,1).$$
Since $e^{-\rho} \approx 1-\rho$ for small $\rho>0$, it follows that the asymptotic linear rate~\eqref{eq.linear.conv} is approximately the same linear rate given by Theorem~\ref{thm.main} when $r = 1$, namely
\[
\gap_k \le \gap_0 \left(1-\frac{q-1}{q\cdot M^{\frac{1}{q-1}}} \right)^k.
\]

The following general version of Proposition~\ref{prop.suff.2.1} describes a broad class of problems that satisfies the $(q,r)$-growth property for various values of $(q,r)$ depending on the degree of uniform smoothness and uniform convexity of $f$ and $\Psi$.

\begin{proposition}\label{prop.suff.gral}
Suppose $f$ is $q$-uniformly smooth on $\dom(\Psi)$.
Then the $(q,r)$-growth property~\eqref{eq.growth.cond} holds for $r = q/p$ if $\Psi$ is $p$-uniformly convex on $\dom(\Psi)$, or if $\Psi = \delta_C$ for a $p$-uniformly convex and closed set $C\subseteq \R^n$ and $\nabla f$ is bounded away from zero in $C$.  More precisely:
\begin{itemize}
\item[(a)] The $(q,r)$-growth property~\eqref{eq.growth.cond} holds for $r = q/p$ and $M = L\cdot \left(p/\mu\right)^{\frac{q}{p}}$ if there is a norm $\|\cdot\|$ in $\R^n$ such that both~\eqref{eq.unif.smooth} and~\eqref{eq.unif.convex} hold for $x,y\in \dom(\Psi)$.
\item[(b)] The $(q,r)$-growth property~\eqref{eq.growth.cond} holds for $r = q/p$ and $M = L\cdot \left(p/(\ell\mu)\right)^{\frac{q}{p}}$ if $\Psi=\delta_C$ for 
a closed convex set $C\subseteq \R^n$, and there is a norm $\|\cdot\|$ in $\R^n$ such that both~\eqref{eq.unif.smooth} and~\eqref{eq.unif.convex.set} hold for $x,y\in C$, and 
\[
\ell := \inf_{x\in C} \|\nabla f(x)\|^* > 0.
\]
\end{itemize}
\end{proposition}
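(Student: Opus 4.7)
The plan is to bound $\D(x,s,\theta)$ in two steps: first dominate it by a multiple of $\theta^q\|s-x\|^q$ using $q$-uniform smoothness of $f$, then replace $\|s-x\|^p$ by a multiple of $\gap(x,g)$ using the appropriate uniform convexity property. The resulting inequality will have the form $\D(x,s,\theta)\le \frac{M\theta^q}{q}\gap(x,g)^{q/p}$, so $r = q/p$ emerges automatically. The first step is routine: unravel~\eqref{eq.def.D} and apply~\eqref{eq.unif.smooth.implied} to get $D_f(x+\theta(s-x),x) \le \frac{L}{q}\theta^q\|s-x\|^q$; the remaining contribution $\Psi(x+\theta(s-x)) - (1-\theta)\Psi(x) - \theta\Psi(s)$ is nonpositive in either case, since in (a) this is exactly the bound~\eqref{eq.unif.convex} (after dropping the nonnegative term), and in (b) with $\Psi = \delta_C$ it vanishes because $x,s\in C$ and $C$ is convex. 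Both cases then yield
\[
\D(x,s,\theta) \le \frac{L}{q}\theta^q\|s-x\|^q.
\]

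The heart of the argument is the matching lower bound $\gap(x,g) \ge \frac{\mu}{p}\|s-x\|^p$ in case (a), respectively $\gap(x,g) \ge \frac{\mu\ell}{p}\|s-x\|^p$ in case (b). The Wolfe gap identity~\eqref{eq.wolfe.gap} reads $\gap(x,g) = \ip{g}{x-s} + \Psi(x) - \Psi(s)$. In case (a), the function $y\mapsto \ip{g}{y}+\Psi(y)$ inherits $p$-uniform convexity from $\Psi$ (adding a linear term preserves~\eqref{eq.unif.convex}) and is minimized at $s\in\partial\Psi^*(-g)$, so the consequence~\eqref{eq.unif.convex.implied} applied to this function gives the desired bound directly. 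In case (b) the Wolfe gap simplifies to $\gap(x,g) = \ip{g}{x-s}$, and~\eqref{eq.unif.convex.set.implied} applied to $s = \argmin_{y\in C}\ip{g}{y}$ together with $\|g\|^*\ge \ell$ delivers it. Raising either inequality to the power $q/p$ and substituting into the earlier display yields the $(q,q/p)$-growth property with $M = L(p/\mu)^{q/p}$ in (a) and $M = L(p/(\ell\mu))^{q/p}$ in (b).

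The only subtle point is book-keeping: in case (a), uniform convexity of $\Psi$ plays two distinct roles---eliminating the $\Psi$-terms in $\D$ and lower-bounding $\gap$---while in case (b) the $\Psi$-terms are free but~\eqref{eq.unif.convex.set.implied} carries an extra factor $\|g\|^*$, which is precisely why boundedness of $\|\nabla f\|^*$ away from zero on $C$ appears in the hypothesis and $\ell$ enters $M$.
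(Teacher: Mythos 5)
Your proposal is correct and follows essentially the same route as the paper's proof: bound $\D(x,s,\theta)$ by $\tfrac{L}{q}\theta^q\|s-x\|^q$ via~\eqref{eq.unif.smooth.implied} (the $\Psi$-terms being nonpositive by convexity), then lower-bound $\gap(x,g)$ by $\tfrac{\mu}{p}\|x-s\|^p$ using the tilted function $\Psi+\ip{g}{\cdot}$ and~\eqref{eq.unif.convex.implied} in case (a), respectively by $\tfrac{\ell\mu}{p}\|x-s\|^p$ using~\eqref{eq.unif.convex.set.implied} and $\|g\|^*\ge\ell$ in case (b), and combine by raising to the power $q/p$. The paper's argument is identical, including the observation that adding a linear term preserves $p$-uniform convexity.
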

\begin{proof}
\begin{itemize}
\item[(a)]
Suppose~\eqref{eq.unif.smooth} holds for some constants $q>1$ and $L >0$, and also~\eqref{eq.unif.convex} holds  for some constants $p>1$ and $\mu >0$.  Then for all $x\in \dom(\Psi)$ and $g :=\nabla f(x)$ the tilted function $\Psi_g := \Psi + \ip{g}{\cdot}$ is also $p$-uniformly convex on $\dom(\Psi_g) = \dom(\Psi)$ for the same $p$ and $\mu$.  Consequently~\eqref{eq.unif.convex.implied} implies that $s := \argmin_y\{\Psi(y) + \ip{g}{y}\} = \argmin_y \Psi_g(y)$ satisfies
\[
\gap(x,g) = \Psi(x) + \ip{g}{x} - \Psi(s) - \ip{g}{s} = \Psi_g(x) - \Psi_g(s) \ge \frac{\mu}{p}\|x-s\|^p.
\]
On the other hand,~\eqref{eq.def.D} and~\eqref{eq.unif.smooth.implied} imply that
\[
\D(x,s,\theta) \le D_f(x+\theta(s-x),x) \le \frac{L \theta^q}{q} \|s-x\|^q.
\] 
It thus follows that~\eqref{eq.growth.cond} holds for $q$ and $r=q/p$ with $M = L\cdot \left({p}/{\mu}\right)^{\frac{q}{p}}$.
\item[(b)] Suppose~\eqref{eq.unif.smooth} holds for some constants $q>1$ and $L >0$,~\eqref{eq.unif.convex.set} holds  for some constants $p\ge 2$ and $\mu >0$, and $\ell := \inf_{x\in C} \|\nabla f(x)\|^* > 0.$ 
Then~\eqref{eq.unif.convex.set.implied} implies that for $x\in\dom(\Psi), g:=\nabla f(x)$ and $s = \argmin_{x\in C} \ip{g}{x}$ we have
\begin{equation}\label{eq.key.step}
\gap(x,g) = \ip{g}{x-s} \ge \frac{\mu}{p}\|g\|^*\cdot \|x-s\|^p \ge 
\frac{\ell\mu}{p}\cdot \|x-s\|^p.
\end{equation}
Thus proceeding as in part (a),~\eqref{eq.unif.smooth.implied} implies that~\eqref{eq.growth.cond} holds for $q$ and $r=q/p$ with $M = 
L\cdot \left({p}/{(\ell \mu)}\right)^{\frac{q}{p}}$ 

\end{itemize}
\end{proof}

Proposition~\ref{prop.suff.gral} and Theorem~\ref{thm.main} yield the following norm-dependent convergence results for two classes of problems that extend the two classes discussed in Section~\ref{sec.2.1}.


First, suppose $\nabla f$ is $\nu$-H\"older continuous on $\dom(\Psi)$ for some $\nu \in (0,1]$ and $\Psi$ is $p$-uniformly convex on $\dom(\Psi)$ for some $p \ge 2$.  Then Proposition~\ref{prop.suff.gral} implies that the $(q,r)$-growth property~\eqref{eq.growth.cond} holds for $q = \nu+1$ and 
$r = (\nu+1)/p$ with $M = L\cdot \left(p/\mu\right)^{\frac{q}{p}}$.  The case $r = 1$, that is, $\nu=1$ and $p=2$ was already discussed in Section~\ref{sec.2.1}.  Hence we focus on the case $r < 1,$ that is, $p > \nu+1$.  
Under the mild assumption that $\gap_0\le M$, inequality~\eqref{eq.conv.gral} implies that
\begin{align}\label{eq.new.conv}
\gap_k  
& 
\le \left(\frac{1}{M^\frac{p-\nu-1}{p\nu}}+\frac{p-\nu-1}{p(\nu+1)}\cdot \frac{k}{M^\frac{1}{\nu}}\right)^\frac{p\nu}{\nu+1-p} 
\le \left(\frac{p(\nu+1) \cdot M^{\frac{1}{\nu}}}{p-\nu-1}\cdot 
 \frac{1}{k}
 \right)^\frac{p\nu}{p-\nu-1}. 
\end{align}
The above $\Oh(1/k^\frac{p\nu}{p-\nu-1})$ convergence rate is new for $p>2$ and extends some of the results considered in~\cite{Ghad19,Nest18} that only consider the cases when $p=2$ or when $\dom(\Psi)$ is bounded.

Second, if $\Psi=\delta_C$, $\nabla f$ is $\nu$-H\"older continuous and bounded away from zero on $C$, and $C\subseteq\R^n$ is a closed and $p$-uniformly set then Proposition~\ref{prop.suff.gral}(b) implies that the $(q,r)$-growth property~\eqref{eq.q=2.r=1} holds for $q=\nu+1, r = q/p$ with $M = L\cdot \left({p}/{(\ell\mu)}\right)^{\frac{q}{p}}$ where $L,\mu,$ and $\ell$ are respectively the Lipschitz continuity constant of $\nabla f$, the strong convexity constant of $C$ and the lower bound on $\|\nabla f(x)\|^*$ on $C$.  Again 
the case $r = 1$, that is, $\nu=1$ and $p=2$ was already discussed in Section~\ref{sec.2.1} and thus we focus on the case $r < 1.$  
Under the mild assumption that $\gap_0\le M$, inequality~\eqref{eq.conv.gral} again implies~\eqref{eq.new.conv} albeit for a different constant $M$. Hence we extend the sublinear convergence result shown in~\cite[Theorem 2.2]{Kerd21} that considers only the special case $\nu = 1$ and $p > 2$.  For these values of $\nu$ and $p$  we get
\[
\gap_k  \le 
\left(\frac{2Mp}{p-2}\cdot\frac{1}{k}\right)^\frac{p}{p-2} 
\]
and thus recover the $\Oh(1/k^{\frac{p}{p-2}})$ sublinear convergence result in~\cite[Theorem 2.2]{Kerd21} albeit with a slightly simpler and sharper constant.

\section{Weak growth property and suboptimality gap}
\label{sec.weak}
Consider the setting of Section~\ref{sec.main}.
Define the {\em suboptimality gap} function $\subopt:\dom(\Psi) \rightarrow \R$ as follows
\[
\subopt(x):=f(x) + \Psi(x) - \min_y\left\{f(y) + \Psi(y)\right\}.
\]
The weak duality property between~\eqref{eq.problem} and its dual~\eqref{eq.dual} implies that for all $(x,u) \in \dom(\Psi)\times \dom(f^*)$ 
\[
\gap(x,u) \ge \subopt(x).
\] 
Therefore any upper bound on the duality gap automatically holds for the suboptimality gap as well.  This section takes this observation further:   Theorem~\ref{thm.main.redux} and Theorem~\ref{prop.conv.gral.redux} provide analogous statements to
 Theorem~\ref{thm.main} and Theorem~\ref{prop.conv.gral}
 for the suboptimality gap by relying on the following relaxed version of~\eqref{eq.growth.cond}.   Suppose $q >1 $ and $r \in [0,1]$.  We shall say that the triple $(\D,\gap,\subopt)$ satisfies the {\em $(q,r)$ weak growth property} if there exists a finite constant $M >0$ such that for all $x \in \dom(\Psi), \, g:=\nabla f(x),$ and $s\in\partial \Psi^*(-g)$ 
\begin{equation}\label{eq.weak.growth.cond}
\D(x,s,\theta) \cdot \subopt(x)^{1-r} \le \frac{M\theta^{q}}{q}\cdot {\gap}(x,g) \text{ for all } \theta \in [0,1].
\end{equation}
Since $\gap(x,g) \ge \subopt(x) \ge 0$, it is immediate that the weak growth property~\eqref{eq.weak.growth.cond} holds whenever the growth property~\eqref{eq.growth.cond} holds.
 
The next two results concern the sequence of suboptimality gaps $$\subopt_k:=\subopt(x_k), \; k=0,1,\dots$$ for the iterates generated by Algorithm~\ref{algo.CG}.  We omit the proofs of Theorem~\ref{thm.main.redux} and Theorem~\ref{prop.conv.gral.redux} below as they are straightforward extensions of the proofs of Theorem~\ref{thm.main} and Theorem~\ref{prop.conv.gral}.

\begin{theorem}\label{thm.main.redux} Suppose Algorithm~\ref{algo.CG} chooses $\theta_k\in [0,1]$ via~\eqref{eq.stepsize} and  $q > 1$ and  $r\in [0,1]$ are such that
$(\D,\gap,\subopt)$ satisfy the $(q,r)$ weak growth property~\eqref{eq.weak.growth.cond} for some finite $M>0$. 
Then for $k=0,1,\dots$
\[
\subopt_{k+1} \le
\subopt_k\left(1-\frac{q-1}{q} \cdot \min\left\{1,\left(\frac{\subopt_k^{1-r}}{M}\right)^\frac{1}{q-1}\right\}\right). 
\]
When $r=1$ we have linear convergence
\[
\subopt_k\le\subopt_0\left(1-\frac{q-1}{q}\cdot
\min\left\{1,\frac{1}{M^{\frac{1}{q-1}}}\right\}
\right)^k.
\]
When $r\in[0,1)$ we have an initial linear convergence regime
\[
\subopt_{k} \le \subopt_0\left(1-\frac{q-1}{q} \right)^{k}, \; k=0,1,2,\dots,k_0
\]
where $k_0$ is the smallest $k$ such that 
$\subopt_k^{1-r} \le M$.  Then for $k\ge k_0$ we have a sublinear convergence regime
\[
\subopt_k\le\left(\subopt_{k_0}^\frac{r-1}{q-1}+\frac{1-r}{q}\cdot\frac{1}{M^{\frac{1}{q-1}}} \cdot (k-k_0)\right)^\frac{q-1}{r-1}.
\]

\end{theorem}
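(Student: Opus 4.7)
My plan is to mimic the proof of Theorem~\ref{thm.main} almost verbatim, with the suboptimality gap playing the role of the duality gap, and to bridge the two via weak duality only at the last possible moment.

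The first step is to translate the gap reduction identity of Proposition~\ref{prop.gap.reduction} into an identity for $\subopt$. Since $f^*(g) + \Psi^*(-g)$ is independent of the primal variable, one has
\[
\gap(x_+, g) - \gap(x, g) = (f+\Psi)(x_+) - (f+\Psi)(x) = \subopt(x_+) - \subopt(x),
\]
so rearranging~\eqref{eq.gap.reduction} yields the clean identity
\[
\subopt(x+\theta(s-x)) = \subopt(x) - \theta\cdot\gap(x,g) + \D(x,s,\theta).
\]
Applied with $x=x_k$, $g=g_k$, $s=s_k$, $\theta=\theta_k$, this gives
\[
\subopt_{k+1} = \subopt_k - \theta_k\cdot\gap(x_k,g_k) + \D(x_k,s_k,\theta_k).
\]

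The second step is to exploit the line-search~\eqref{eq.stepsize}: since $\theta_k$ minimizes $(1-\theta)\gap(x_k,g_k) + \D(x_k,s_k,\theta)$, it also minimizes $-\theta\cdot\gap(x_k,g_k) + \D(x_k,s_k,\theta)$ over $\theta\in[0,1]$. Plugging the $(q,r)$ weak growth property~\eqref{eq.weak.growth.cond} into this minimum yields
\[
\subopt_{k+1} \le \subopt_k + \gap(x_k,g_k)\cdot\min_{\theta\in[0,1]}\left\{-\theta + \frac{M\theta^q}{q\cdot\subopt_k^{1-r}}\right\}.
\]
A direct computation, identical to that in the proof of Theorem~\ref{thm.main} with $\subopt_k$ in place of $\gap_k$, shows that this inner minimum is attained at $\hat\theta = \min\{1,(\subopt_k^{1-r}/M)^{1/(q-1)}\}$ and that its value is at most $-\frac{q-1}{q}\hat\theta$.

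The third step — which I expect to be the main subtlety — is to convert the bound, which carries a stray $\gap(x_k,g_k)$, into a recursion purely in $\subopt_k$. This is where weak duality $\gap(x_k,g_k)\ge \subopt_k$ enters: combined with the fact that the coefficient of $\gap(x_k,g_k)$ in $-\tfrac{q-1}{q}\hat\theta \gap(x_k,g_k)$ is negative, we obtain
\[
\subopt_{k+1} \le \subopt_k - \tfrac{q-1}{q}\hat\theta\cdot\gap(x_k,g_k) \le \subopt_k\Bigl(1 - \tfrac{q-1}{q}\hat\theta\Bigr),
\]
which is the claimed recurrence. Note that the weak growth property is built precisely to make this last inequality valid: the factor $\subopt(x)^{1-r}$ on the left cancels with the passage from $\gap$ to $\subopt$ on the right through the inequality $\subopt\le\gap$, so the weak form is tight enough to drive the argument but no stronger assumption is needed.

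Finally, as in the proof of Theorem~\ref{thm.main}, the linear rate when $r=1$ and the initial linear regime when $r\in[0,1)$ follow immediately from the recurrence. For the sublinear regime with $k\ge k_0$, I apply Lemma~\ref{lemma.recurrence} with $p := \frac{1-r}{q-1}$, $\beta_k := \subopt_k$, and $\delta_k := \frac{q-1}{q}\cdot\frac{1}{M^{1/(q-1)}}$, exactly as in the original proof. No additional calculation is needed.
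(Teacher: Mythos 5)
Your proposal is correct and is essentially the paper's own argument: the paper omits the proof of Theorem~\ref{thm.main.redux}, stating it is a word-by-word extension of the proof of Theorem~\ref{thm.main}, and your proof is precisely that extension, correctly pinpointing the one step that needs thought --- namely that the weak growth property~\eqref{eq.weak.growth.cond} leaves a stray factor $\gap(x_k,g_k)$ with a negative coefficient, which weak duality $\gap(x_k,g_k)\ge\subopt_k$ then converts into the recurrence in $\subopt_k$ alone. The only cosmetic caveat is that dividing by $\subopt_k^{1-r}$ tacitly assumes $\subopt_k>0$, but the case $\subopt_k=0$ is trivial since the exact line-search~\eqref{eq.stepsize} (with $\theta=0$ feasible) guarantees $\subopt_{k+1}\le\subopt_k$.
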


\begin{theorem}\label{prop.conv.gral.redux}
Suppose $c, \rho\in (0,1)$ are such that $c+\rho > 1$ and Algorithm~\ref{algo.CG} chooses $\theta_k\in [0,1]$ so that $\rho\cdot \hat \theta_k \le \theta_k \le \hat \theta_k$ for $\hat \theta_k$ as in~\eqref{eq.stepsize.relax}. 
Suppose $q > 1$ and  $r\in [0,1]$ are such that
 $(\D,\gap,\subopt)$ satisfy the $(q,r)$ weak growth property~\eqref{eq.weak.growth.cond} for some finite $M>0$.
Then for $k=0,1,\dots$
\[
\subopt_{k+1}\le 
\subopt_k\left(1-(c+\rho -1)\cdot\min\left\{1,\left(\frac{q(1-c)}{M} \cdot \subopt_k^{1-r}\right)^{\frac{1}{q-1}} \right\}\right).
\]
When $r=1$ we have linear convergence
\[
\subopt_k\le\subopt_0\left(1-(c+\rho -1)\cdot\min\left\{1,\left(\frac{ q(1-c)}{M}\right)^\frac{1}{q-1}\right\} \right)^k.
\]
When $r\in[0,1)$ we have an initial linear convergence regime
\[
\subopt_{k} \le \subopt_0\left(1-(c+\rho-1)\right)^{k}, \; k=0,1,2,\dots,k_0
\]
where $k_0$ is the smallest $k$ such that $\subopt_k^{1-r} \le \frac{M}{q(1-c)}$.  Subsequently for $k\ge k_0$ we have a sublinear convergence regime
\[
\subopt_k\le\left(\subopt_{k_0}^\frac{r-1}{q-1}+\frac{(1-r)(c+\rho -1)}{q-1}\cdot\left(\frac{q(1-c)}{ M}\right)^\frac{1}{q-1}\cdot (k-k_0) \right)^\frac{q-1}{r-1}.
\]
\end{theorem}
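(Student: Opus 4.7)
The plan is to parallel the proof of Theorem~\ref{prop.conv.gral} almost verbatim, replacing $\gap_k$ by $\subopt_k$ and replacing the growth property by the weak growth property~\eqref{eq.weak.growth.cond}. The starting point is Proposition~\ref{prop.gap.reduction}, which gives $\gap(x_{k+1},g_k) = (1-\theta_k)\gap(x_k,g_k) + \D(x_k,s_k,\theta_k)$. Since $\gap(x,g_k)-\gap(y,g_k) = (f+\Psi)(x) - (f+\Psi)(y)$ as the dual terms $f^*(g_k)+\Psi^*(-g_k)$ cancel, subtracting $\gap(x_k,g_k)$ from both sides gives the key identity
\[
\subopt_{k+1} - \subopt_k = (f+\Psi)(x_{k+1}) - (f+\Psi)(x_k) = -\theta_k\cdot\gap(x_k,g_k) + \D(x_k,s_k,\theta_k).
\]

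Next I would insert the stepsize rule. Since $\theta_k \le \hat\theta_k$, the defining inequality of $\hat\theta_k$ in~\eqref{eq.stepsize.relax} applied at $\theta=\theta_k$ yields $\D(x_k,s_k,\theta_k) \le (1-c)\theta_k\cdot\gap(x_k,g_k)$. Combining this with $\theta_k \ge \rho\hat\theta_k$ and $\theta_k \le \hat\theta_k$ in the identity above gives
\[
\subopt_{k+1} - \subopt_k \le -(c+\rho-1)\hat\theta_k\cdot\gap(x_k,g_k) \le -(c+\rho-1)\hat\theta_k\cdot\subopt_k,
\]
where the final step uses weak duality $\gap(x_k,g_k)\ge \subopt(x_k)=\subopt_k$. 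This produces the clean one-step contraction $\subopt_{k+1} \le \subopt_k(1-(c+\rho-1)\hat\theta_k)$, which is the analogue of~\eqref{eq.gral.step1}.

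The remaining task is to lower-bound $\hat\theta_k$ in a way that matches the claimed recurrence. Using the weak growth property~\eqref{eq.weak.growth.cond}, the condition $\D(x_k,s_k,\theta) \le (1-c)\theta\cdot\gap(x_k,g_k)$ defining $\hat\theta_k$ is implied by $\frac{M\theta^q}{q}\cdot\frac{\gap(x_k,g_k)}{\subopt_k^{1-r}} \le (1-c)\theta\cdot\gap(x_k,g_k)$, i.e.\ by $\theta^{q-1} \le \frac{q(1-c)\subopt_k^{1-r}}{M}$. Thus
\[
\hat\theta_k \ge \min\left\{1,\left(\frac{q(1-c)}{M}\cdot\subopt_k^{1-r}\right)^{\frac{1}{q-1}}\right\},
\]
which when substituted into the contraction yields the first claimed inequality.

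The remaining conclusions follow mechanically: when $r=1$ the factor $\subopt_k^{1-r}$ disappears and the constant contraction gives the linear rate; when $r\in[0,1)$ the initial regime is the trivial estimate where the $\min$ equals $1$, and the sublinear tail follows by applying Lemma~\ref{lemma.recurrence} with $p:=\frac{1-r}{q-1}$, $\beta_k:=\subopt_k$, and $\delta_k:=(c+\rho-1)(\frac{q(1-c)}{M})^{\frac{1}{q-1}}$ for $k\ge k_0$, exactly as in the proof of Theorem~\ref{prop.conv.gral}. I do not expect a real obstacle; the only subtle point is recognizing that differencing the gap reduction identity produces $\subopt_{k+1}-\subopt_k$ (because the dual terms cancel), which is precisely what lets the stepsize condition and weak duality deliver the contraction without ever needing a minimum over past dual iterates.
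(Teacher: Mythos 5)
Your proposal is correct and is precisely the ``word by word extension'' of the proof of Theorem~\ref{prop.conv.gral} that the paper invokes when omitting this proof: the differencing observation $\subopt_{k+1}-\subopt_k=-\theta_k\,\gap(x_k,g_k)+\D(x_k,s_k,\theta_k)$, the use of weak duality $\gap(x_k,g_k)\ge\subopt_k$ in place of $\gap(x_k,g_k)\ge\gap_k$, the lower bound on $\hat\theta_k$ from the weak growth property, and the application of Lemma~\ref{lemma.recurrence} with $p=\frac{1-r}{q-1}$ all match the paper's intended argument. Your closing remark correctly identifies the one genuinely new ingredient (the cancellation of the dual terms, which removes any need for the minimum over past dual iterates), so nothing is missing.
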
 
Proposition~\ref{prop.Loja} below provides a nice complement to Proposition~\ref{prop.suff.gral}(b).  Proposition~\ref{prop.Loja} describes a class of problems that 
satisfies the weak growth property similar to that in Proposition~\ref{prop.suff.gral}(b).  However, instead of assuming that $\nabla f$ is bounded away from zero in $C$, Proposition~\ref{prop.Loja} relies on the following type of H\"olderian error bound as in~\cite{Kerd21,XY18}.  Suppose  $C\subseteq \R^n$ is a closed convex set,  $f^\star :=\min_{x\in C} f(x)$ is finite, $X^\star := \{x\in C: f(x) = f^\star\}$ is nonempty.  Suppose $\R^n$ is endowed with a norm $\|\cdot\|$ and $\gamma \in [0,1]$.
We shall say that $f$ satisfies the {\em $\gamma$-H\"olderian error bound}  on $C$ if there exists a finite constant $K > 0$ such that for all $x\in C$ 
\begin{equation}\label{eq.heb}
\min_{y\in X^\star} \|x-y\|\le K \cdot (f(x) - f^\star)^\gamma. 
\end{equation}
The H\"olderian error bound~\eqref{eq.heb} is known to hold generically when $C$ is a compact set and $f$ is subanalytic, as established by {\L}ojasiewicz~\cite{Loja65}, and further developed by Kurdyka~\cite{Kurd98} and Bolte et al.~\cite{BoltDL07}.

\begin{proposition}\label{prop.Loja} The $(q,r)$ weak growth property~\eqref{eq.weak.growth.cond} holds for $r = \gamma q/p$ if
$\Psi = \delta_C$ for some closed and $p$-uniformly convex set $C\subseteq \R^n$, $f$ is $q$-uniformly smooth on $C$, and the $\gamma$-H\"olderian error bound~\eqref{eq.heb} holds for $\gamma \in [0,1]$.\end{proposition}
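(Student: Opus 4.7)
The plan is to show the single key inequality
\[
\|x-s\|^q \cdot \subopt(x)^{1-r} \le \left(\frac{pK}{\mu}\right)^{q/p} \cdot \gap(x,g),
\]
for $r = \gamma q/p$, and then combine it with $q$-uniform smoothness of $f$ to obtain the weak growth property with $M = L(pK/\mu)^{q/p}$. Since $\Psi = \delta_C$, we have $\D(x,s,\theta) = D_f(x+\theta(s-x),x)$ and $\gap(x,g) = \langle g,x-s\rangle$, so by~\eqref{eq.unif.smooth.implied} applied to the $q$-uniformly smooth $f$ we immediately get $\D(x,s,\theta) \le \frac{L\theta^q}{q}\|x-s\|^q$. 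Multiplying by $\subopt(x)^{1-r}$ and plugging in the key inequality yields the claim, so the whole proof reduces to proving the display above.

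The first step is to lower bound $\|g\|^*$ in terms of $\subopt(x)$ using the H\"olderian error bound. Pick $y^\star \in \argmin_{y\in X^\star}\|x-y\|$. Convexity of $f$ gives $\langle g,x-y^\star\rangle \ge f(x)-f^\star = \subopt(x)$, while duality gives $\langle g,x-y^\star\rangle \le \|g\|^*\|x-y^\star\|$, and~\eqref{eq.heb} gives $\|x-y^\star\| \le K\cdot \subopt(x)^\gamma$. Combining these, either $\subopt(x)=0$ (in which case the target inequality is trivial since its left-hand side vanishes), or
\[
\|g\|^* \;\ge\; \frac{\subopt(x)^{1-\gamma}}{K}.
\]

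The second step is to invoke~\eqref{eq.unif.convex.set.implied} for the $p$-uniformly convex set $C$, which yields $\gap(x,g) \ge \frac{\mu}{p}\|g\|^*\|x-s\|^p$. To convert the exponent $p$ into the desired exponent $q$, I would use the fact that $q \le p$ (which holds since $q \in (1,2]$ and $p \ge 2$ as is standard for these notions) together with the weak duality bound $\gap(x,g) \ge \subopt(x)$. Specifically, split
\[
\gap(x,g) \;=\; \gap(x,g)^{1-q/p}\cdot \gap(x,g)^{q/p} \;\ge\; \subopt(x)^{1-q/p}\cdot \left(\tfrac{\mu\|g\|^*}{p}\right)^{q/p}\|x-s\|^q,
\]
where the first factor uses $\gap(x,g) \ge \subopt(x) \ge 0$ and $1-q/p \ge 0$. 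Plugging in the lower bound $\|g\|^* \ge \subopt(x)^{1-\gamma}/K$ and tracking the exponent on $\subopt(x)$ gives $1 - q/p + (1-\gamma)q/p = 1 - \gamma q/p = 1-r$, which produces exactly the key inequality.

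The main obstacle I anticipate is managing the interplay of the three lower bounds (weak duality $\gap \ge \subopt$, the uniform-convexity bound with the $\|g\|^*$ factor, and the H\"olderian bound) so that the exponents on $\subopt(x)$ and $\|x-s\|$ line up correctly; the clean splitting $\gap = \gap^{1-q/p}\cdot \gap^{q/p}$ is the device that makes this work, and it relies on $q \le p$. Apart from this algebraic bookkeeping, the rest is a direct assembly of Proposition~\ref{prop.suff.gral}(b)-style reasoning with $\subopt(x)^{1-\gamma}/K$ used in place of the uniform lower bound $\ell$ on $\|g\|^*$.
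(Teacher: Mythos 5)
Your proposal is correct and is essentially the paper's own proof: both arguments combine the uniform-convexity implication~\eqref{eq.unif.convex.set.implied}, the lower bound $\|g\|^*\ge \subopt(x)^{1-\gamma}/K$ obtained from convexity, Cauchy--Schwarz, and the error bound~\eqref{eq.heb}, the weak duality bound $\gap(x,g)\ge\subopt(x)$ raised to the power $1-q/p\ge 0$ to fix the exponent mismatch, and $q$-uniform smoothness via~\eqref{eq.unif.smooth.implied}, yielding the identical constant $M = L\cdot(pK/\mu)^{q/p}$. The only difference is bookkeeping: you isolate the key inequality $\|x-s\|^q\cdot\subopt(x)^{1-r}\le (pK/\mu)^{q/p}\cdot\gap(x,g)$ up front by splitting $\gap = \gap^{1-q/p}\cdot\gap^{q/p}$, whereas the paper records $\|x-s\|^p\cdot\subopt(x)^{1-\gamma}\le \frac{pK}{\mu}\cdot\gap(x,g)$ and performs the same $q/p$-power rearrangement inside its final chain of inequalities.
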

\begin{proof} Suppose~\eqref{eq.unif.smooth} holds for some constants $q\in(1,2]$ and $L >0$,~\eqref{eq.unif.convex.set} holds  for some constants $p\ge 2$ and $\mu >0$, and~\eqref{eq.heb} holds  for some constants $\gamma \in [0,1]$ and $K >0$. Then~\eqref{eq.unif.convex.set.implied} implies that for all $x\in C$ and $g:=\nabla f(x)$ we have
\begin{equation}\label{eq.gap.g}
\gap(x,g) = \ip{g}{x-s} \ge \frac{\mu}{p}\|g\|^*\cdot \|x-s\|^p.
\end{equation}
Next, the convexity of $f$, Cauchy-Schwarz inequality, and~\eqref{eq.heb}  imply that for $x^\star:=\argmin_{y\in X^\star} \|x-y\|$
\begin{equation}\label{eq.g.bound}
\subopt(x) = f(x) - f(x^\star)\le \ip{g}{x-x^\star} \le \|g\|^*\cdot\|x-x^\star\| \le K\cdot \|g\|^*\cdot \subopt(x)^\gamma.
\end{equation}
From~\eqref{eq.gap.g} and~\eqref{eq.g.bound} it follows that
\[
 \|x-s\|^p \cdot\subopt(x)^{1-\gamma}\le \frac{pK}{\mu} \cdot \gap(x,g).
\]
The latter inequality,~\eqref{eq.unif.smooth.implied}, and $\subopt(x) \le \gap(x,g)$ imply that
\begin{align*}
\D(x,s,\theta) \cdot \subopt(x)^{1-\frac{\gamma q}{p}}  
&= \D_f(x+\theta(s-x),x) \cdot \subopt(x)^{1-\frac{\gamma q}{p}} 
\\ & \le\frac{L}{q}\cdot \theta^q \cdot \|s-x\|^q \cdot \subopt(x)^{\frac{(1-\gamma)q}{p}}\cdot \gap(x,g)^{1-\frac{q}{p}}
\\ & =\frac{L}{q}\cdot \theta^q \cdot \left(\|s-x\|^p \cdot \subopt(x)^{1-\gamma}\right)^\frac{q}{p}\cdot \gap(x,g)^{1-\frac{q}{p}}
\\
&\le  L\cdot\left(\frac{pK}{\mu}\right)^\frac{q}{p} \cdot\frac{\theta^q}{q} \cdot
\gap(x,g).
\end{align*}
Therefore~\eqref{eq.weak.growth.cond} holds for $r = \gamma q/p$ and 
$M =  L\cdot\left(pK/\mu\right)^{q/p}.$
\end{proof}

Proposition~\ref{prop.Loja} and Theorem~\ref{thm.main.redux} yield the following norm-dependent convergence results for a class of problems previously considered in~\cite{KerdL21,XY18}.

Suppose $\Psi=\delta_C$ for $C\subseteq \R^n$ closed and $p$-uniformly convex, $\nabla f$ is $\nu$-H\"older continuous on $C$, and $f$ satisfies the $\gamma$-H\"olderian error bound on $C$. Then Proposition~\ref{prop.Loja} implies that the $(q,r)$ weak growth property holds for $q=\nu+1, r = \gamma q/p$ with $M =  L\cdot\left(pK/\mu\right)^{q/p}$ where $L,\mu,$ and $K$ are respectively the H\"older continuity constant of $\nabla f$, the $p$-convexity constant of $C$, and the $\gamma$-H\"olderian error bound constant of $f$ on $C$.

When $\nu=1,\;p=2,$ and $\gamma = 1$ we have $q=2,\;r=1,$ and $M = \frac{2LK}{\mu}$.  Thus Theorem~\ref{thm.main.redux} yields the linear convergence rate
\[
\subopt_k \le \subopt_0\left(1-\frac{1}{2}\cdot\min\left\{1,\frac{\mu}{2LK}\right\} \right)^k.
\]
We thus recover the linear convergence result in~\cite[Theorem 1]{XY18} with a sharper rate.
For all other values of $\nu,p,\gamma$ we have $r = \gamma q/p < 1$. To simplify notation and exposition for this case,  assume that $\subopt_0 \le M$.  Thus Theorem~\ref{thm.main.redux} yields the sublinear rate
\begin{align*}
\subopt_k 
& \le\left(\frac{1}{M^\frac{1-r}{q-1}}+\frac{1-r}{q}\cdot\frac{k}{M^{\frac{1}{q-1}}}\right)^\frac{q-1}{r-1}
\le \left(\frac{(\nu+1) \cdot M^{\frac{1}{\nu}}}{1-\gamma (\nu+1)/p}\cdot \frac{1}{k}\right)^\frac{\nu}{1-\gamma (\nu+1)/p}.
\end{align*}
Hence we extend the sublinear convergence results in~\cite[Theorem 1]{XY18} and in~\cite[Theorem 2.9]{Kerd21}.  Both~\cite{XY18} and~\cite{Kerd21} consider  only the special case $\nu = 1$. In this special case we get $r = 2\gamma/p$ and thus
\[
\subopt_k  \le 
\left(\frac{2M}{1-2\gamma/p}\cdot \frac{1}{k}\right)^\frac{1}{1-2\gamma/p}
\]
In particular, we recover the $\Oh\left(1/k^{\frac{1}{1-\gamma}}\right)$
sublinear convergence rate in~\cite[Theorem 1]{XY18} which only considers the case $p=2$.  We also recover the $\Oh\left(1/k^{\frac{1}{1-2\gamma/p}}\right)$ sublinear convergence result in~\cite[Theorem 2.9]{Kerd21} which only considers the case $\gamma \in [0,1/2]$.  Like both~\cite{Kerd21} and~\cite{XY18}, we also recover the $\Oh\left(1/k^2\right)$  
sublinear convergence result in~\cite[Theorem 2]{GarbH15} which only considers the case $p=2$ and $\gamma = 1/2$.
In all cases we get  slightly simpler and sharper constants.

\section{Local growth property}
\label{sec.final}
We next describe an extension of the main developments in Section~\ref{sec.main} 
to a {\em local} setting.  The same extension applies to the developments in Section~\ref{sec.weak}.

Consider the setting of Section~\ref{sec.main}.  Suppose $q>1$ and $r\in [0,1]$.  We shall say that the pair $(\D,\gap)$ satisfies the $(q,r)$ {\em local growth property} if there exist constants $\epsilon > 0$ and $M > 0$ such that for all $x \in \dom(\Psi), \, g:=\nabla f(x),$ and $s := \partial \Psi^*(-g)$ that satisfy $\gap(x,g) < \epsilon$ we have
\begin{equation}\label{eq.local.growth.cond}
\D(x,s,\theta) \le \frac{M\theta^{q}}{q}\cdot {\gap}(x,g)^r \text{ for all } \theta \in [0,1].
\end{equation}
It is straightforward to see that local versions of Theorem~\ref{thm.main} and Theorem~\ref{prop.conv.gral} also hold if we replace the growth property~\eqref{eq.growth.cond} with the local growth property~\eqref{eq.local.growth.cond} provided $\gap_0 = \gap(x_0,g_0) < \epsilon$.   It is also evident that the local growth property~\eqref{eq.local.growth.cond} is less stringent than the growth property~\eqref{eq.growth.cond}.  In particular, the natural local version of Proposition~\ref{prop.suff.gral} holds if the uniform smoothness and uniform convexity assumptions hold on a neighborhood of an optimal solution to~\eqref{eq.problem}.  The following proposition, which is a straightforward extension of some arguments in~\cite{Kerd21},
describes another class of problems that satisfy the local growth condition~\eqref{eq.local.growth.cond}.  Proposition~\ref{prop.suff.gral.local} provides a counterpart to Proposition~\ref{prop.suff.gral}(b) that relaxes the assumption of $p$-uniform convexity of $C$ and boundedness away from zero of $\nabla f$ on $C$ to the far less stringent local scaling inequality~\eqref{eq.local.scaling} that only involves $\nabla f(x^\star)$ where $x^\star \in C$ is a minimizer of~\eqref{eq.problem}.  This relaxed assumption comes at the expense of downgrading the growth condition for $r = q/p$ to the local growth condition for $r = q(q-1)/(p(p-1))$.  As noted in~\cite[Remark 2.6]{Kerd21}, in the special case when $q=p=2$ there is no downgrade in the value of $r$. For all other values of $q\in(1,2]$ and $p\ge 2$ the downgrade factor  $(q-1)/(p-1)$ on $r$ is the price we pay for relaxing the uniform convexity and boundedness assumptions to the less stringent local scaling assumption.

\begin{proposition}\label{prop.suff.gral.local}
Suppose $\Psi = \delta_C$ for a closed convex set $C\subseteq\R^n$,  $x^\star \in C$ is a minimizer of~\eqref{eq.problem}, $\nu \in (0,1]$, and $p \ge 2$.  The local growth property~\eqref{eq.local.growth.cond} holds for $\epsilon = 1, \; q=1+\nu,$ and $r=q(q-1)/(p(p-1))$ if $\nabla f$ is $\nu$-H\"older continuous 
on $C$ and there exists $\sigma > 0$ such that the following local scaling inequality holds for all $x\in C$
\begin{equation}\label{eq.local.scaling}
\ip{\nabla f(x^\star)}{x-x^\star} \ge \sigma\cdot \|x-x^\star\|^p.
\end{equation}
\end{proposition}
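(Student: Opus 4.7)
My plan is to adapt the proof of Proposition~\ref{prop.suff.gral}(b), replacing its use of the global $p$-uniform convexity of $C$ with the local scaling inequality~\eqref{eq.local.scaling}. Since $\Psi=\delta_C$, the $\nu$-H\"older continuity of $\nabla f$ together with $q=1+\nu$ gives, via~\eqref{eq.unif.smooth.implied},
$$\D(x,s,\theta) = D_f(x+\theta(s-x),x) \le \frac{L\theta^q}{q}\|s-x\|^q,$$
so it suffices to establish a bound of the form $\|s-x\|^q \le M'\gap(x,g)^r$ whenever $\gap(x,g)$ is sufficiently small, with $r = q(q-1)/(p(p-1))$.

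I would first control $\|x-x^\star\|$ by $\gap(x,g)$. The chain $\gap(x,g) = \ip{g}{x-s} \ge f(x)-f(s) \ge f(x)-f(x^\star)$, together with convexity of $f$ and~\eqref{eq.local.scaling}, yields $\gap(x,g) \ge \sigma\|x-x^\star\|^p$, hence $\|x-x^\star\| \le (\gap(x,g)/\sigma)^{1/p}$. The main technical step is to bound $\|s-x^\star\|$ in terms of $\|x-x^\star\|$: applying~\eqref{eq.local.scaling} at $s \in C$ yields
$$\sigma\|s-x^\star\|^p \le \ip{\nabla f(x^\star)}{s-x^\star} = \ip{g}{s-x^\star} + \ip{\nabla f(x^\star)-g}{s-x^\star}.$$
The first summand on the right is nonpositive since $s$ minimizes $\ip{g}{\cdot}$ on $C$ and $x^\star \in C$; the second is at most $L\|x-x^\star\|^\nu\cdot\|s-x^\star\|$ by Cauchy-Schwarz and $\nu$-H\"older continuity. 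Dividing by $\|s-x^\star\|$ produces $\|s-x^\star\|^{p-1}\le (L/\sigma)\|x-x^\star\|^\nu$.

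Finally, since $\nu\in(0,1]$ and $p\ge 2$ we have $\nu/(p-1)\le 1$, so choosing $\epsilon$ small enough to force $\|x-x^\star\|\le 1$ whenever $\gap(x,g)\le \epsilon$ (for instance $\epsilon := \sigma$) lets the triangle inequality collapse to $\|s-x\| \le C_0\|x-x^\star\|^{\nu/(p-1)}$ for an explicit constant $C_0$ depending on $L,\sigma,p$. Raising this to the $q$-th power and substituting $\|x-x^\star\|^p \le \gap(x,g)/\sigma$ yields $\|s-x\|^q\le M'\gap(x,g)^{q\nu/(p(p-1))}$, and since $\nu = q-1$ the exponent equals $r$, delivering~\eqref{eq.local.growth.cond} for a suitable $M$. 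The main obstacle---and the reason for the ``downgrade'' from $r=q/p$ to $r=q(q-1)/(p(p-1))$ flagged in the discussion preceding the statement---is precisely the comparison $\nu/(p-1)\le 1$: without global uniform convexity of $C$ we must route $\|s-x\|$ through $\|x-x^\star\|$, losing an extra factor of $\nu$ in the exponent of $\gap(x,g)$.
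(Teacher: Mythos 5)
Your overall architecture matches the paper's proof of Proposition~\ref{prop.suff.gral.local}: control $\|x-x^\star\|$ by $\gap(x,g)^{1/p}$ via the scaling inequality, control $\|s-x^\star\|^{p-1}$ by $(L/\sigma)\|x-x^\star\|^\nu$ using optimality of $s$ for the linear oracle plus $\nu$-H\"older continuity, then combine through the triangle inequality and~\eqref{eq.unif.smooth.implied}. Your bookkeeping (forcing $\|x-x^\star\|\le 1$ via $\epsilon=\sigma$ and collapsing exponents of $\|x-x^\star\|$) differs only cosmetically from the paper's (which works under $\gap(x,g)\le 1$ and collapses exponents of $\gap(x,g)$); both deliver the exponent $q\nu/(p(p-1))=q(q-1)/(p(p-1))=r$.

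There is, however, one step that is false as written: the chain $\gap(x,g)=\ip{g}{x-s}\ge f(x)-f(s)\ge f(x)-f(x^\star)$. The second inequality asserts $f(s)\le f(x^\star)=\min_{y\in C}f(y)$, which fails in general since $s\in C$ need not minimize $f$; the inequality actually points the other way, $f(s)\ge f(x^\star)$. The conclusion you want, $\gap(x,g)\ge f(x)-f(x^\star)$, is nonetheless true --- it is weak duality, and it is what the paper invokes in its inequality~\eqref{eq.gap.1} --- but the correct justification routes through $x^\star$ rather than through $f(s)$: since $s$ minimizes $\ip{g}{\cdot}$ over $C$ and $x^\star\in C$, we have $\ip{g}{s}\le\ip{g}{x^\star}$, hence $\ip{g}{x-s}\ge\ip{g}{x-x^\star}\ge f(x)-f(x^\star)$, the last step by convexity of $f$ at $x$. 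With that one-line repair, your argument is complete and essentially identical to the paper's.
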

\begin{proof}
From~\eqref{eq.local.scaling} and the convexity of $f$ it follows that for $x\in C$ and $g = \nabla f(x)$ 
\begin{equation}\label{eq.gap.1}
\gap(x,g) \ge f(x) - f(x^\star) \ge \ip{\nabla f(x^\star)}{x^\star-x} \ge \sigma\cdot \|x-x^\star\|^p.
\end{equation}
In addition, for $s = \argmin_{y\in C} \ip{g}{y}= \argmin_{y\in C} \ip{\nabla f(x)}{y}$ the local scaling inequality~\eqref{eq.local.scaling} implies that
\begin{equation}\label{eq.second}
\ip{\nabla f(x^\star)-\nabla f(x)}{s-x^\star} \ge \ip{\nabla f(x^\star)}{s-x^\star}\ge \sigma \cdot \|s-x^\star\|^p.
\end{equation}
Thus the $\nu$-H\"older continuity of $\nabla f$ and H\"older's inequality imply that for some finite H\"older continuity constant $L > 0$
\[
L \|x^\star - x\|^\nu \cdot \|s-x^\star\| 
\ge \sigma\cdot\|s-x^\star\|^p.
\]
Hence $\sigma \cdot \|s-x^\star\|^{p-1} \le L\|x^\star - x\|^\nu$ and thus~\eqref{eq.gap.1} yields
\begin{equation}\label{eq.gap.2}
\gap(x,g) \ge\sigma \cdot \|x-x^\star\|^p \ge \sigma \cdot \left(\frac{\sigma}{L}\right)^\frac{p}{\nu}\|s-x^\star\|^{\frac{p(p-1)}{\nu}}
\end{equation}
Let $\epsilon := 1$.  From~\eqref{eq.gap.1} and~\eqref{eq.gap.2} it follows that for $\gap(x,g)< \epsilon$
\begin{align*}
\|s-x\| &\le \|x^\star-x\| + \|s-x^\star\| \\
&\le 
\frac{1}{\sigma^\frac{1}{p}} \cdot
\gap(x,g)^\frac{1}{p}
+\frac{1}{\sigma^\frac{\nu}{p(p-1)}}\cdot
\left(\frac{L}{\sigma}\right)^\frac{1}{p-1}
\cdot
\gap(x,g)^\frac{\nu}{p(p-1)}
 \\
&\le \left(\frac{1}{\sigma^\frac{1}{p}}+\frac{1}{\sigma^\frac{\nu}{p(p-1)}}\cdot\left(\frac{L}{\sigma}\right)^\frac{1}{p-1}\right)\cdot \gap(x,g)^\frac{\nu}{p(p-1)}.
\end{align*}
The last step above holds because $\gap(x,g)< \epsilon = 1$ and $\nu/(p-1) \in(0,1]$ imply that $\gap(x,g)^\frac{1}{p} \le 
\gap(x,g)^\frac{\nu}{p(p-1)}$.
Therefore~\eqref{eq.unif.smooth.implied} implies that for $\gap(x,g)< \epsilon$
\begin{align*}
\D(x,s,\theta) & = \D_f(x+\theta(s-x),x) \\
& \le \frac{L\theta^{1+\nu}}{1+\nu}\cdot  \|s-x\|^{1+\nu} \\
& \le  L\cdot 
\left(\frac{1}{\sigma^\frac{1}{p}}+\frac{1}{\sigma^\frac{\nu}{p(p-1)}}\cdot\left(\frac{L}{\sigma}\right)^\frac{1}{p-1}\right)
^{1+\nu} \cdot\frac{\theta^{1+\nu}}{1+\nu}\cdot \gap(x,g)^\frac{\nu(1+\nu)}{p(p-1)}.
\end{align*}
Therefore whenever $\gap(x,g)<\epsilon$ the local growth property~\eqref{eq.local.growth.cond} holds for $q=1+\nu$ and $r = q(q-1)/(p(p-1))$ with 
$$M =  L\cdot 
\left(\frac{1}{\sigma^\frac{1}{p}}+\frac{1}{\sigma^\frac{\nu}{p(p-1)}}\cdot\left(\frac{L}{\sigma}\right)^\frac{1}{p-1}\right)
^{1+\nu}. 
$$

\end{proof}

\end{document}